\documentclass[reqno]{amsart}
\usepackage{amsaddr}
\usepackage[utf8]{inputenc}
\usepackage[T1]{fontenc}
\usepackage{amsmath, amsfonts, lineno, graphicx, tikz, xcolor, amsthm}
\usepackage[colorlinks=true, allcolors=blue]{hyperref}
\usepackage[scr=euler]{mathalpha}

\newtheorem{theorem}{Theorem}
\newtheorem{definition}{Definition}

\newcommand{\R}[0]{\mathcal{R}}
\newtheorem{corollary}{Corollary}

\begin{document}

\author[L.D.~Lu and T.~Vanzan]{Liu-Di Lu$^{{\lowercase\mathrm{a}}}$ and
		Tommaso Vanzan$^{{\lowercase\mathrm{b}}}$
		}

\address[Liu Di Lu]{
	$\,^{\lowercase\mathrm{a}}$ Centre for Mathematical Sciences, 
	Lund University\\ 
	Märkesbacken 4, 22362 Lund,
	Sweden}
	
\address[Tommaso Vanzan]{
	$\,^{\lowercase\mathrm{b}}$Dipartimento di Scienze Matematiche, 
	Politecnico di Torino \\
  	Corso Duca degli Abruzzi 24, 10129 Torino,  
	Italy}


\title[Weak scalability of time parallel Schwarz for parabolic OCPs]{Weak scalability of time parallel Schwarz methods for parabolic optimal control problems}


\keywords{parallel-in-time methods, parallel Schwarz, weak scalability, optimal control, Toeplitz matrices}

\subjclass[2020]{Primary
			 65M55; 
			  Secondary  49N10, 
			  65F08, 
			  35Q93  
			    }  %

\date{\today}


\begin{abstract}
Parabolic optimal control problems arise in numerous scientific and engineering applications. They typically lead to large-scale coupled forward–backward systems that cannot be treated with classical time-stepping schemes and are computationally expensive to solve. Therefore, parallel methods are essential to reduce the computational time required. In this work, we investigate a time domain decomposition approach, namely the time parallel Schwarz method, applied to parabolic optimal control problems. We analyze the convergence behavior and focus on the weak scalability property of this method as the number of time intervals increases. To characterize the spectral radius of the iteration matrix, we present two analysis techniques: the construction of a tailored matrix norm and the application of block Toeplitz matrix theory. Our analyses yield both nonasymptotic bounds on the spectral radius and an asymptotic characterization of the eigenvalues as the number of time intervals tends to infinity. 
Numerical experiments further confirm our theoretical findings and demonstrate the weak scalability of the time parallel Schwarz method. This work introduces the first theoretical tool for analyzing the weak scalability of time domain decomposition methods, and our results shed light on the suitability of our algorithm for large-scale simulations on modern high-performance computing architectures.
\end{abstract}

\maketitle

\section{Introduction}
Parabolic optimal control problems arise in a wide range of scientific and engineering applications, including diffusion-driven processes~\cite{Merger2017}, thermal regulation~\cite{Meinlschmidt2017}, environmental economics~\cite{augeraud2019distributed}, cancer treatment~\cite{schattler2015optimal} and inverse problems~\cite{Kimura1993}. Their mathematical formulation typically involves minimizing a cost functional subject to some time-dependent partial differential equation (PDE) constraints. After deriving the first-order optimality system, such problems lead to large scale coupled systems, whose efficient solution remains a central challenge in high-performance scientific computing. While spatial parallelism has been extensively exploited through domain decomposition and multigrid methods (see, e.g., \cite{benamou1999domain, lagnese2012domain, schoberl2011robust}), the time direction is often treated sequentially due to the intrinsic causality of evolution equations, that is, the solution at the current time depends on the solution at the previous instant. Therefore, it is not natural to parallelize the time direction. Nonetheless, as parallelization in space has reached saturation and massively parallel computing architectures have become increasingly available, these trends have motivated the development of parallel-in-time algorithms. We refer to~\cite{Gander2015, Gander2024} for a brief review.

In general, parallel algorithms often partition the global domain into many small subdomains, and local problems are solved iteratively while exchanging information with neighboring subdomains. When applied to optimal control problems governed by time-dependent PDEs, such strategies must address the forward-backward structure of the optimality system. Generally speaking, iterative parallel-in-time algorithms for parabolic optimal control problems can be classified into three main families: algorithms based on multiple shooting methods, e.g.~\cite{Gander2020, Maday2007, Mathew2010, Ulbrich2015}, algorithms based on multigrid methods, e.g.~\cite{Borzi2003, GonzalezAndrade2012, Gotschel2019, Gunther2019, Li2017}, and algorithms based on domain decomposition methods, e.g.~\cite{Barker2015, Ciaramella2024, Gander2016, Gander20242048, Gander20242588, Heinkenschloss2005, Mandal2018, Riahi2016}. Other parallel-in-time algorithms for such problems also exist, e.g., a modified matching-Schur complement preconditioner has recently been introduced in~\cite{Lin2024} to solve parabolic optimal control problems. Among these approaches, time domain decomposition methods proposed in~\cite{Gander2016, Gander20242048, Gander20242588} provide a novel framework that takes advantage of the forward-backward structure of the optimality system. In this work, we investigate the {\em weak scalability} of a nonoverlapping time domain decomposition method, namely a time parallel Schwarz algorithm, applied to parabolic optimal control problems. 

We recall that a parallel algorithm is said to be weakly scalable if it can solve larger and larger problems in a fixed amount of time with the number of processors increased proportionally. This is a weaker version of the so-called strong scalability, which requires that the acceleration generated by parallelization scales proportionally with the number of processors, that is, the time to solve a fixed-size problem decreases proportionally with the number of processors. Strong scalability is difficult to achieve since, in the limit, the work assigned to each processor becomes negligible, and the communication cost dominates.
The weak scalability property of an algorithm is however crucial for using high-performance computing architectures, as it measures its efficiency when the problem size and the number of processors simultaneously scale. This requires having both algorithmic convergence and parallel performance.
When specializing these definitions to iterative methods, we say that a domain decomposition method is weakly scalable if its contraction factor $\rho$ is uniformly bounded by a constant independent of the number of fixed-size subdomains $N$ and is strictly less than one. The relevance of the weak scalability property has been shown, for instance, in computational chemistry, where the authors apply domain decomposition methods to reduce the computational time in simulating long chains of molecules~\cite{Cances2013, Lipparini2013}. In~\cite{Ciaramella2017, Ciaramella2018}, the authors have studied the scalability of the parallel Schwarz method applied to elliptic problems with growing chains of fixed-sized subdomains, while~\cite{berrone2022weak} extended the methodology to the topological structures appearing in discrete fracture networks.
The authors in~\cite{Chaouqui2018} further investigate the weak scalability of four classical, one-level, domain decomposition methods applied to elliptic problems in both one- and two-dimensional settings. Their results showed that the parallel Schwarz method applied to a two-dimensional Poisson equation is weakly scalable, while it is not for a one-dimensional problem, where further coarse corrections need to be added to recover scalability. An extension to time-harmonic problems is available in~\cite{bootland2021analysis}.

In the current work, we focus on solving the first-order optimality system of parabolic optimal control problems using the time parallel Schwarz method.  We investigate three main research questions:
\begin{itemize}
\item As the time direction is of dimension one, is the time parallel Schwarz method weakly scalable?

\item How does the weak scalability depend on the fixed-size time interval length and problem parameters?

\item Can we characterize the convergence behavior of the time parallel Schwarz method?
\end{itemize}
In our context, the weak scalability regime corresponds to simulating and controlling the physical system over longer and longer time intervals.
To answer these questions, we provide a detailed convergence analysis. After a semi-discretization in space, we characterize the asymptotic behavior of the time parallel Schwarz algorithm using two different techniques: (i) construct a special matrix norm, (ii) use block Toeplitz matrix theory~\cite{Bottcher1997, Trefethen2005}. These approaches allow us to derive two complementary results that not only characterize the spectral radius of the iteration matrix, but also describe the asymptotic distribution of the spectrum as the number of time intervals tends to infinity.

The rest of the manuscript is organized as follows. Section~\ref{sec:2} introduces the parabolic optimal control problem, its optimality system, and defines the time parallel Schwarz algorithm. Section~\ref{sec:3} presents the weak scalability analysis using two different techniques, and derives an estimate for the spectral radius of the iteration matrix. Section~\ref{sec:4} shows numerical experiments to confirm our theoretical results and illustrates the weak scalability of the algorithm. Section~\ref{sec:5} summarizes the main findings and outlines directions for future work.

\section{Model problem}\label{sec:2}
In this section, we define our linear quadratic parabolic control problem. Consider a final time $T > 0$ and a bounded open set $\Omega \subset \mathbb{R}^d$, $d=1, 2, 3$, with Lipschitz boundary $\partial\Omega$. The space-time domain is defined by $Q := \Omega\times (0, T)$. We are interested in solving the following distributed control problem: for a given target $\hat{y} \in L^2(Q)$ and a penalization parameter $\nu >0$, we minimize the cost functional
\begin{equation}\label{eq:J}
J(y, u) := \frac{1}{2} \|y -\hat{y}\|^2_{L^2(Q)}  + \frac{\nu}{2}\|u\|^2_{L^2(Q)},
\end{equation}
subject to the linear parabolic equation\footnote{Our analysis covers the general state equation $\partial_t y -\mathcal{L}(y) =u $, provided that a spatial discretization of $\mathcal{L}$ leads to a diagonalizable matrix $A$ with real and positive eigenvalues, see Section \ref{sec:eigendecomposition}.} 
\begin{equation}\label{eq:heat}
\partial_t y - \Delta y = u \ \text{ in } Q, 
\quad
y = g  \ \text{ on } \partial\Omega\times (0, T),
\quad
y = y_{0} \ \text{ on } \Omega\times\{0\},
\end{equation}
with $y_0\in L^2(\Omega)$ a given initial condition and $g$ a sufficiently regular function. Deriving the optimality conditions of problem~\eqref{eq:J}-\eqref{eq:heat} using the Lagrange multipliers technique, we obtain the reduced first-order optimality system:
\begin{equation}\label{eq:reduced}
\begin{aligned}
\partial_t y - \Delta y &= \nu^{-1}p& \text{ in }& Q,& 
y &= g& \text{ on }& \partial\Omega\times (0, T),&
y &= y_{0}&  \text{ on }& \Omega\times\{0\},&  
\\
\partial_t p + \Delta p &= y - \hat y& \text{ in }& Q,& 
p &= 0& \text{ on }& \partial\Omega\times (0, T),&
p &= 0&  \text{ on }& \Omega\times\{T\},
\end{aligned}
\end{equation} 
$p$ being the Lagrange multiplier, and with the optimal control given by $u=p/\nu$. We refer to~\cite[Chapter 3]{Troltzsch2010} for a detailed derivation of such an optimality system. The goal of our study is to solve the reduced first-order optimality system~\eqref{eq:reduced} using time domain decomposition methods. In particular, we are interested in the behavior of these methods when the number of time intervals $N$ grows while their size is fixed, representing a larger and larger final time $T$.

\subsection{Time domain decomposition}
Let $N\in\mathbb{N}$ and consider a set of time intervals $\left\{(t_{n-1}, t_n)\right\}_{n=1}^N$ shown in Figure~\ref{fig:decomposition}. 
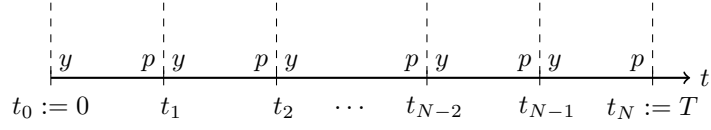
\begin{figure}
\begin{center}
\begin{tikzpicture}
\draw [thick, ->] (0, 0) -- (8.5, 0) node[anchor = west] {$t$};
\foreach \x in {0, 1.5, 3, 5, 6.5, 8}{\draw (\x, 0.1) -- (\x, -0.1);}
\node at (0, -0.4) {$t_0 := 0$};
\node at (1.6, -0.4) {$t_1$};
\node at (3.1, -0.4) {$t_2$};
\node at (4, -0.4) {\ldots};
\node at (5.1, -0.4) {$t_{N-2}$};
\node at (6.6, -0.4) {$t_{N-1}$};
\node at (8, -0.4) {$t_{N} := T$};
\foreach \x in {0, 1.5, 3, 5, 6.5, 8}{\draw[dashed] (\x, 1) -- (\x, 0);}
\foreach \x in {0.2, 1.7, 3.2, 5.2, 6.7} {\node at (\x, 0.2) {$y$};}
\foreach \x in {1.3, 2.8, 4.8, 6.3, 7.8} {\node at (\x, 0.2) {$p$};}
\end{tikzpicture}
\end{center}
\caption{Illustration of $N$ time intervals with fixed size $\Delta t$.}
\label{fig:decomposition}
\end{figure}
Each time interval has a finite, fixed size $\Delta t$, $t_n := n\Delta t$ for $n \in \{0, \ldots, N\}$, and $T=N\Delta t$. We denote the space-time subdomain by $Q_n := \Omega\times(t_{n-1}, t_n)$ for $n \in\{1, \ldots, N\}$ and the interface by $\Sigma_n := \Omega\times \{t_n\}$ for $n = 0, \ldots, N$. The optimality system~\eqref{eq:reduced} has a forward-backward structure, where the state variable $y$ propagates forward in time with an initial condition $y(\cdot, 0) = y_0(\cdot)$ at $\Sigma_0$, and the adjoint variable propagates backward in time with a final condition $p(\cdot, T) = 0$ at $\Sigma_N$. To retain the same forward-backward structure for the restriction of the system~\eqref{eq:reduced} in each subdomain $Q_n$, one needs to impose an "initial" condition for $y$ at the interface $\Sigma_{n-1}$ and a "final" condition $p$ at the interface $\Sigma_n$, $n\in\{1,\dots,N\}$, as illustrated in Figure~\ref{fig:decomposition}. We define the functions $y_n$ and $p_n$ as the restriction of $y$ and $p$ to $Q_n$. Then, the restricted system in $Q_n$ using the same forward-backward structure of~\eqref{eq:reduced} can be formulated as follows: for $n \in\{2, \ldots, N-1\}$, the pair $(y_n, p_n)$ satisfies
\begin{equation}\label{eq:Q2-QN-1}
\begin{aligned}
\partial_t y_n - \Delta y_n &= \nu^{-1}p_n \ \text{ in } Q_n,& 
y_n &= g \ \text{ on } \partial\Omega\times (0, T),& 
y_n &= y_{n-1} \ \text{ on } \Sigma_{n-1},
\\
\partial_t p_n + \Delta p_n&= y_n - \hat{y}_n  \ \text{ in } Q_n,&
p_n &= 0 \ \text{ on } \partial\Omega\times (0, T),&
p_n &= p_{n+1} \ \text{ on } \Sigma_n,
\end{aligned}
\end{equation}
where the conditions at $\Sigma_{n-1}$ and $\Sigma_n$ describe the interaction of the $n$-th subdomain with its neighbor subdomains $n-1$ and $n+1$, and the function $\hat{y}_n$ is the restriction of the target function $\hat y$ to $Q_n$. The pair $(y_1, p_1)$ of the first subdomain $Q_1$ solves
\begin{equation}\label{eq:Q1}
\begin{aligned}
\partial_t y_1 - \Delta y_1 &= \nu^{-1}p_1 \ \text{ in } Q_1,&
y_1 &= g \ \text{ on } \partial\Omega\times (0, T),&
y_1 &= y_{0} \ \text{ on } \Sigma_{0}, 
\\
\partial_t p_1 + \Delta p_1 &= y_1 - \hat{y}_1  \ \text{ in } Q_1,&
p_1 &= 0 \ \text{ on } \partial\Omega\times (0, T),&
p_1 &= p_{2} \ \text{ on } \Sigma_1,
\end{aligned}
\end{equation}
where $y_0$ is the given initial condition. The pair $(y_N, p_N)$ of the last subdomain solves
\begin{equation}\label{eq:QN}
\begin{aligned}
\partial_t y_N - \Delta y_N &= \nu^{-1}p_N \ \text{ in } Q_N,& 
y_N &= g \ \text{ on } \partial\Omega\times (0, T),&
y_N &= y_{N-1} \ \text{ on } \Sigma_{N-1}, 
\\
\partial_t p_N + \Delta p_N &= y_N - \hat{y}_N \ \text{ in } Q_N,&
p_N &= 0 \ \text{ on } \partial\Omega\times (0, T),&
p_N &= 0 \ \text{ on } \Sigma_{N},
\end{aligned}
\end{equation}
where $p_N|_{\Sigma_N} = 0$ is the given final condition for $p$ in~\eqref{eq:reduced}.

Note that the choice $p_n = p_{n+1}$ on $\Sigma_n$, $n \in\{ 1, \ldots, N-1\}$, imposes the continuity in time of the adjoint variable $p$ at each interface. Similarly, the choice $y_n = y_{n-1}$ on $\Sigma_{n-1}$, $n \in\{ 1, \ldots, N\}$, imposes the continuity in time of the state variable $y$.

\subsection{Time parallel Schwarz method}
The classical parallel Schwarz method (PSM) was introduced by Lions in~\cite{lions1988schwarz} to generalize the alternating Schwarz method to a parallel setting. Here, we are interested in applying the PSM to solve~\eqref{eq:Q2-QN-1}-\eqref{eq:QN} with a \textit{non-overlapping} decomposition in time. In particular, we analyze the convergence behavior of the time PSM for a growing number of fixed-sized space-time subdomains and investigate its weak scalability. Consider the problem~\eqref{eq:Q2-QN-1}-\eqref{eq:QN} and some initial guesses $(y_1^0, p_1^0), (y_{2}^0, p_{2}^0), \ldots, (y_N^0, p_N^0)$. Since we are interested in the time decomposition, we omit hereafter to specify the boundary conditions $y_n = g$ and $p_n = 0$ on $\partial\Omega\times (0, T)$ for $n \in \{ 1, \ldots, N\}$. For the iteration index $\ell = 1, 2, \ldots$, the time PSM defines the approximation sequences $\{y_n^\ell, p_n^\ell\}_\ell$ by solving
\begin{equation}\label{eq:PSQ2-QN-1}
\begin{aligned}
\partial_t y_n^\ell - \Delta y_n^\ell &= \nu^{-1}p_n^\ell&   \text{ in }& Q_n,& 
y_n^\ell &= y_{n-1}^{\ell-1}& \text{ on }& \Sigma_{n-1}, 
\\
\partial_t p_n^\ell + \Delta p_n^\ell &= y_n^\ell - \hat{y}_n& \text{ in }& Q_n,&
p_n^\ell &= p_{n+1}^{\ell-1}& \text{ on }& \Sigma_n,&
\end{aligned}
\end{equation}
for $n \in\{ 2, \ldots, N-1\}$. In the first time interval, $\{y_1^\ell, p_1^\ell\}_\ell$ satisfy 
\begin{equation}\label{eq:PSQ1}
\begin{aligned}
\partial_t y_1^\ell - \Delta y_1^\ell &= \nu^{-1}p_1^\ell& \text{ in }& Q_1,& 
y_1^\ell &= y_{0} & \text{ on }& \Sigma_{0},& \\
\partial_t p_1^\ell + \Delta p_1^\ell &= y_1^\ell - \hat{y}_1& \text{ in }& Q_1,& 
p_1^\ell &= p_{2}^{\ell-1}& \text{ on }& \Sigma_1,&
\end{aligned}
\end{equation}
while in the last interval $\{y_N^\ell, p_N^\ell\}_\ell$ solves
\begin{equation}\label{eq:PSQN}
\begin{aligned}
\partial_t y_N^\ell - \Delta y_N^\ell &= \nu^{-1}p_N^\ell& \text{ in }& Q_N,& 
y_N^\ell &= y_{N-1}^{\ell-1}& \text{ on }& \Sigma_{N-1},& \\
\partial_t p_N^\ell + \Delta p_N^\ell &= y_N^\ell - \hat{y}_N& \text{ in }& Q_N,& 
p_N^\ell &= 0& \text{ on }& \Sigma_N.&
\end{aligned}
\end{equation}
In each subdomain $Q_n$, $n \in\{1, \ldots, N\}$, the algorithm~\eqref{eq:PSQ2-QN-1}-\eqref{eq:PSQN} uses the information at the previous iteration to update the value of $(y_n, p_n)$ on the interface. The state variable $y_n$ takes information coming from the left neighbouring time interval at the previous iteration, and the adjoint variable $p_n$ takes information coming from the right neighbouring time interval at the previous iteration. Therefore, each subproblem can be solved independently and in parallel at each iteration.

\subsection{Eigendecomposition}\label{sec:eigendecomposition}
To analyze the convergence of the time PSM algorithm~\eqref{eq:PSQ2-QN-1}-\eqref{eq:PSQN}, we introduce the errors $e_{y_n^\ell} := y_n - y_n^\ell$ and $e_{p_n^\ell} := p_n - p_n^\ell$. Substracting the exact solutions of~\eqref{eq:Q2-QN-1}-\eqref{eq:QN} from the approximations~\eqref{eq:PSQ2-QN-1}-\eqref{eq:PSQN}, the errors then satisfy 
\begin{equation}\label{eq:PSErrQ2-QN-1}
\begin{aligned}
\partial_t e_{y_n^\ell} - \Delta e_{y_n^\ell} &= \nu^{-1}e_{p_n^\ell}&   \text{ in }& Q_n,& 
e_{y_n^\ell} &= e_{y_{n-1}^{\ell-1}}& \text{ on }& \Sigma_{n-1}, 
\\
\partial_t e_{p_n^\ell} + \Delta e_{p_n^\ell} &= e_{y_n^\ell} & \text{ in }& Q_n,&
e_{p_n^\ell} &= e_{p_{n+1}^{\ell-1}}& \text{ on }& \Sigma_n,&
\end{aligned}
\end{equation}
for $n \in\{ 2, \ldots, N-1\}$, and
\begin{equation}\label{eq:PSErrQ1}
\begin{aligned}
\partial_t e_{y_1^\ell} - \Delta e_{y_1^\ell} &= \nu^{-1}e_{p_1^\ell}& \text{ in }& Q_1,& 
e_{y_1^\ell} &= 0 & \text{ on }& \Sigma_{0},& \\
\partial_t e_{p_1^\ell} + \Delta e_{p_1^\ell} &= e_{y_1^\ell} & \text{ in }& Q_1,& 
e_{p_1^\ell} &= e_{p_{2}^{\ell-1}}& \text{ on }& \Sigma_1,&
\end{aligned}
\end{equation}
for $n=1$, and
\begin{equation}\label{eq:PSErrQN}
\begin{aligned}
\partial_t e_{y_N^\ell} - \Delta e_{y_N^\ell} &= \nu^{-1}e_{p_N^\ell}& \text{ in }& Q_N,& 
e_{y_N^\ell} &= e_{y_{N-1}^{\ell-1}}& \text{ on }& \Sigma_{N-1},& \\
\partial_t e_{p_N^\ell} + \Delta e_{p_N^\ell} &= e_{y_N^\ell} & \text{ in }& Q_N,& 
e_{p_N^\ell} &= 0& \text{ on }& \Sigma_N,&
\end{aligned}
\end{equation}
for $n=N$. The boundary conditions on $\partial \Omega \times (0, T)$ become $e_{y_n^\ell} = 0$ and $e_{p_n^\ell} = 0$ for all $n \in \{1, \ldots, N\}$.

Since our goal is to investigate the scalability in time, we introduce a spatial discretization $-\Delta \approx A \in \mathbb{R}^{M\times M}$, where $M$ corresponds to the number of degrees of freedom. For instance, one can use a centered finite difference or a finite element discretization in space. In these cases, the matrix $A$ is diagonalizable with an invertible matrix $P$ such that $P^{-1}AP = \text{diag}(\lambda_1, \ldots, \lambda_M)$ with $0<\lambda_1<\ldots<\lambda_M$. Well-known results (see, e.g., ~\cite[Chapter 6.3.2]{quarteroni1994numerical}) fully characterize the asymptotic behavior of $\{\lambda_m\}_{m=1}^M$ with respect to the mesh size: provided that the mesh is quasi-uniform, there exist two constants $\underline{C}$ and $\overline{C}$ such that $\underline{C} h^d\leq\lambda_m\leq \overline{C}h^{d-2}$ for any $m\in\{1,\dots,M\}$. 
This decomposition allows us to focus only on the time variable.

Applying first the spatial discretization and then diagonalization to~\eqref{eq:PSErrQ2-QN-1}-\eqref{eq:PSErrQN}, we obtain for each eigenvalue $\lambda_m$ that, 
\begin{equation}\label{eq:PSErrQ2-QN-1ODE}
\begin{aligned}
\dot z_{n, m}^\ell + \lambda_m z_{n, m}^\ell &= \nu^{-1}q_{n, m}^\ell& \text{ in }& (t_{n-1}, t_n),& 
z_{n, m}^\ell(t_{n-1}) &= z_{n-1, m}^{\ell-1}(t_{n-1}),
\\
\dot q_{n, m}^\ell - \lambda_m q_{n, m}^\ell &= z_{n, m}^\ell& \text{ in }& (t_{n-1}, t_n),& 
q_{n, m}^\ell(t_n) &= q_{n+1, m}^{\ell-1}(t_n),
\end{aligned}
\end{equation}
for $n \in\{ 2, \ldots, N-1\}$, and
\begin{equation}\label{eq:PSErrQ1ODE}
\begin{aligned}
\dot z_{1, m}^\ell + \lambda_m z_{1, m}^\ell &= \nu^{-1}q_{1, m}^\ell& \text{ in }& (t_0, t_1),& 
z_{1, m}^\ell(t_0) &= 0,
\\
\dot q_{1, m}^\ell - \lambda_m q_{1, m}^\ell &= z_{1, m}^\ell& \text{ in }& (t_0, t_1),&
q_{1, m}^\ell(t_1) &= q_{1, m}^{\ell-1}(t_1),&
\end{aligned}
\end{equation}
for $n = 1$, and
\begin{equation}\label{eq:PSErrQNODE}
\begin{aligned}
\dot z_{N, m}^\ell + \lambda_m z_{N, m}^\ell &= \nu^{-1}q_{N, m}^\ell& \text{ in }& (t_{N-1}, t_N),&
z_{N, m}^\ell(t_{N-1}) &= z_{N-1, m}^{\ell-1}(t_{N-1}), 
\\
\dot q_{N, m}^\ell - \lambda_m q_{N, m}^\ell &= z_{N, m}^\ell & \text{ in }& (t_{N-1}, t_N),&
q_{N, m}^\ell(t_N) &= 0,
\end{aligned}
\end{equation}
for $n=N$. Here, $\boldsymbol{z}_n^\ell := P^{-1}\boldsymbol{e}_{y_n^\ell}$, $\boldsymbol{q}_n^\ell := P^{-1}\boldsymbol{e}_{p_n^\ell}$, and $z_{n, m}^\ell$, $q_{n, m}^\ell$ are the $m$-th components of the vectors $\boldsymbol{z}_n^\ell$, $\boldsymbol{q}_n^\ell$. The dot denotes the common notation of the time derivative. The iterations~\eqref{eq:PSErrQ2-QN-1ODE}-\eqref{eq:PSErrQNODE} are systems of coupled first-order ODEs for each eigenvalue $\lambda_m$, and there are in total $M$ independent such systems. In particular, we have the following two relations between the variables $z_{n, m}^\ell$ and $q_{n, m}^\ell$,
\[q_{n, m}^\ell = \nu\left(\dot z_{n, m}^\ell + \lambda_m z_{n, m}^\ell\right),
\quad
z_{n, m}^\ell = \dot q_{n, m}^\ell - \lambda_m q_{n, m}^\ell,\]
for all $n\in\{1, \ldots, N\}$ and all $m\in\{1, \ldots, M\}$. Eliminating $q_{n, m}^\ell$ by $z_{n, m}^\ell$, one can transform the coupled first-order ODE systems~\eqref{eq:PSErrQ2-QN-1ODE}-\eqref{eq:PSErrQNODE} into second-order ODEs of $z_{n, m}^\ell$ as
\begin{equation}\label{eq:PSErrQ2-QN-1ODE2}
\begin{aligned}
\ddot z_{n, m}^\ell - \sigma_m^2 z_{n, m}^\ell &= 0 \quad \text{ in } (t_{n-1}, t_n), 
\\
z_{n, m}^\ell(t_{n-1}) &= z_{n-1, m}^{\ell-1}(t_{n-1}), 
\\
\dot z_{n, m}^\ell(t_n) + \lambda_m z_{n, m}^\ell (t_n) &= \dot z_{n+1, m}^{\ell-1}(t_n) + \lambda_m z_{n+1, m}^{\ell-1} (t_n),
\end{aligned}
\end{equation}
for $n \in \{2, \ldots, N-1\}$, and
\begin{equation}\label{eq:PSErrQ1ODE2}
\begin{aligned}
\ddot z_{1, m}^\ell - \sigma_m^2 z_{1, m}^\ell &= 0 \quad \text{ in } (t_0, t_1), \\
z_{1, m}^\ell(t_0) &= 0, \\
\dot z_{1, m}^\ell(t_1) + \lambda_m z_{1, m}^\ell(t_1) &= \dot z_{2, m}^{\ell-1}(t_1) + \lambda_m z_{2, m}^{\ell-1}(t_1),
\end{aligned}
\end{equation}
for $n=1$, and for $n=N$
\begin{equation}\label{eq:PSErrQNODE2}
\begin{aligned}
\ddot z_{N, m}^\ell - \sigma_m^2 z_{N, m}^\ell &= 0 \quad \text{ in } (t_{N-1}, t_N), \\
z_{N, m}^\ell(t_{N-1}) &= z_{N-1, m}^{\ell-1}(t_{N-1}), \\
\dot z_{N, m}^\ell(t_N) + \lambda_m z_{N, m}^\ell(t_N) &= 0,
\end{aligned}
\end{equation}
where $\sigma_m := \sqrt{\lambda_m^2 + 1/\nu}$. Note that by doing so, we transform a Dirichlet condition $q_{n, m}^\ell(t_n) = q_{n+1, m}^{\ell-1}(t_n)$ into a Robin type condition in time $\dot z_{n, m}^\ell(t_n) + \lambda_m z_{n, m}^\ell (t_n) = \dot z_{n+1, m}^{\ell-1}(t_n) + \lambda_m z_{n+1, m}^{\ell-1} (t_n)$; and the forward-backward structure in~\eqref{eq:PSErrQ2-QN-1ODE}-\eqref{eq:PSErrQNODE} disappears in~\eqref{eq:PSErrQ2-QN-1ODE2}-\eqref{eq:PSErrQNODE2}. 
Alternatively, one can eliminate $z_{n, m}^\ell$ by $q_{n, m}^\ell$ and transform~\eqref{eq:PSErrQ2-QN-1ODE}-\eqref{eq:PSErrQNODE} into second-order ODEs of  $q_{n, m}^\ell$. As they are all equivalent, we consider~\eqref{eq:PSErrQ2-QN-1ODE2}-\eqref{eq:PSErrQNODE2} to analyze the convergence.

\subsection{Iteration matrix}
A general solution of these second-order ODEs~\eqref{eq:PSErrQ2-QN-1ODE2}-\eqref{eq:PSErrQNODE2} is given by
\[z_{n, m}^\ell(t) = A_{n, m}^\ell \sinh(\sigma_m t) + B_{n, m}^\ell \cosh(\sigma_m t),\]
where $A_{n, m}^\ell$, $B_{n, m}^\ell$ are coefficients determined by the initial and final conditions in each time interval $(t_{n-1}, t_n)$. We also specify the Robin term, which appears frequently, 
\begin{equation*}
\begin{split}
\dot z_{n, m}^\ell(t) + \lambda_m z_{n, m}^\ell(t) = &A_{n, m}^\ell\big(\sigma_m\cosh(\sigma_m t) + \lambda_m\sinh(\sigma_m t)\big)\\ 
&+ B_{n, m}^\ell\big(\sigma_m\sinh(\sigma_m t) + \lambda_m\cosh(\sigma_m t)\big).
\end{split}
\end{equation*}
To simplify notations, we denote the Dirichlet transmission condition by $D_{n, m}^\ell := z_{n, m}^\ell(t_n)$ and the Robin transmission condition by $R_{n, m}^\ell := \dot{z}_{n, m}^\ell(t_{n-1}) + \lambda_m z_{n, m}^\ell(t_{n-1})$. We also use the shorthand notations $s_{n, m} := \sinh(\sigma_m t_n)$ and $c_{n, m} := \cosh(\sigma_m t_n)$.

For the middle time intervals $n \in\{ 2, \ldots, N-1\}$, the two transmission conditions in~\eqref{eq:PSErrQ2-QN-1ODE2} give
\begin{equation*}
\begin{aligned}
A_{n, m}^\ell s_{n-1, m} + B_{n, m}^\ell c_{n-1, m}  &= D_{n-1, m}^{\ell-1}, \\
A_{n, m}^\ell (\sigma_mc_{n, m} + \lambda_ms_{n, m}) + B_{n, m}^\ell (\sigma_ms_{n, m} +  \lambda_mc_{n, m}) &= R_{n+1, m}^{\ell-1}.
\end{aligned}
\end{equation*}
A direct computation leads to the two coefficients in the solutions $z_{n, m}^\ell(t)$ of~\eqref{eq:PSErrQ2-QN-1ODE2},
\[\begin{aligned}
A_{n, m}^\ell &= \frac{ D_{n-1, m}^{\ell-1}(\sigma_ms_{n, m} +  \lambda_mc_{n, m}) - R_{n+1, m}^{\ell-1}c_{n-1, m} }
{ s_{n-1, m}(\sigma_ms_{n, m} +  \lambda_mc_{n, m})  - c_{n-1, m}(\sigma_mc_{n, m} + \lambda_ms_{n, m}) },
\\
B_{n, m}^\ell &= \frac{ R_{n+1, m}^{\ell-1}s_{n-1, m} - D_{n-1, m}^{\ell-1} (\sigma_mc_{n, m} + \lambda_ms_{n, m}) }
{ s_{n-1, m} (\sigma_ms_{n, m} +  \lambda_mc_{n, m}) - c_{n-1, m}  (\sigma_mc_{n, m} + \lambda_ms_{n, m}) }.
\end{aligned}\]
Evaluating then $\dot z_{n, m}^\ell(t) + \lambda_m z_{n, m}^\ell(t)$ at $t_{n-1}$ gives 
\[\begin{aligned}
& R_{n, m}^\ell = \dot{z}_{n, m}^\ell(t_{n-1}) + \lambda_m z_{n, m}^\ell(t_{n-1})
\\
= &\frac{ D_{j-1, i}^{\ell-1}(\sigma_ms_{n, m} +  \lambda_mc_{n, m}) - R_{n+1, m}^{\ell-1}c_{n-1, m} }
{ s_{n-1, m}(\sigma_ms_{n, m} +  \lambda_mc_{n, m})  - c_{n-1, m}(\sigma_mc_{n, m} + \lambda_ms_{n, m}) }(\sigma_m c_{n-1, m} + \lambda_ms_{n-1, m}) 
\\
&+ \frac{ R_{n+1, m}^{\ell-1}s_{n-1, m} - D_{n-1, m}^{\ell-1} (\sigma_mc_{n, m} + \lambda_ms_{n, m}) }
{ s_{n-1, m} (\sigma_ms_{n, m} +  \lambda_mc_{n, m}) - c_{n-1, m}  (\sigma_mc_{n, m} + \lambda_ms_{n, m}) }(\sigma_m s_{n-1, m}  + \lambda_mc_{n-1, m})
\\
= &\frac{ (\sigma_m^2 - \lambda_m^2)(c_{n-1, m}s_{n, m} - s_{n-1, m}c_{n, m})D_{n-1, m}^{\ell-1} + \sigma_m(s_{n-1, m}^2  - c_{n-1, m}^2)R_{n+1, m}^{\ell-1} }
{ s_{n-1, m}(\sigma_ms_{n, m} +  \lambda_mc_{n, m})  - c_{n-1, m}(\sigma_mc_{n, m} + \lambda_ms_{n, m}) }.
\end{aligned}\]
Using properties of the sums and differences of arguments for hyperbolic functions, the denominator can be simplified to
\[\begin{aligned}
&s_{n-1, m}(\sigma_ms_{n, m} +  \lambda_mc_{n, m})  - c_{n-1, m}(\sigma_mc_{n, m} + \lambda_ms_{n, m}) \\
= \; &\sigma_m(s_{n-1, m}s_{n, m} - c_{n-1, m}c_{n, m}) + \lambda_m(s_{n-1, m}c_{n, m} - c_{n-1, m}s_{n, m}) \\
= \; &\sigma_m\left(\sinh(\sigma_m t_{n-1})\sinh(\sigma_m t_n) - \cosh(\sigma_m t_{n-1})\cosh(\sigma_m t_n) \right) \\
&+ \lambda_m\left(\sinh(\sigma_m t_{n-1})\cosh(\sigma_m t_n) - \cosh(\sigma_m t_{n-1})\sinh(\sigma_m t_n)\right) \\
= \; &-\sigma_m\cosh\left(\sigma_m (t_n - t_{n-1})\right) + \lambda_m\sinh\left(\sigma_m (t_{n-1} - t_n)\right).
\end{aligned}\]
Similarly the numerators become $(\sigma_m^2 - \lambda_m^2)(c_{n-1, m}s_{n, m} - s_{n-1, m}c_{n, m}) =\sinh(\sigma_m (t_n - t_{n-1})) / \nu$, and $\sigma_m(s_{n-1, m}^2  - c_{n-1, m}^2) = -\sigma_m$ by analogy with the Pythagorean trigonometric identity.  Recalling that $\Delta t = t_n - t_{n-1}$, we find
\begin{equation}\label{eq:Rn}
R_{n, m}^\ell = \dot{z}_{n, m}^\ell(t_{n-1}) + \lambda_m z_{n, m}^\ell(t_{n-1}) = C_{1, m}(\Delta t) D_{n-1, m}^{\ell-1} + C_{2, m}(\Delta t) R_{n+1, m}^{\ell-1},
\end{equation}
with
\begin{equation}\label{eq:expression_C1C2}
\begin{aligned}
C_{1, m}(\Delta t) &:= \frac{-\nu^{-1}\sinh(\sigma_m \Delta t)}{\sigma_m\cosh(\sigma_m \Delta t) + \lambda_m\sinh (\sigma_m \Delta t)},\\
C_{2, m}(\Delta t) &:= \frac{\sigma_m}{\sigma_m\cosh(\sigma_m\Delta t) + \lambda_m\sinh (\sigma_m\Delta t)}.
\end{aligned}
\end{equation}
Note that $0<C_{2, m}(\Delta t)<1$ for any positive $\lambda_m$, $\nu$ and $\Delta t$. On the other hand, evaluating $z_{n, m}^\ell(t)$ at $t_n$ gives
\[\begin{aligned}
D_{n, m}^\ell = &z_{n, m}^\ell(t_n)
\\
= &\frac{ D_{n-1, m}^{\ell-1}(\sigma_ms_{n, m} +  \lambda_mc_{n, m}) - R_{n+1, m}^{\ell-1}c_{n-1, m} }
{ s_{n-1, m}(\sigma_ms_{n, m} +  \lambda_mc_{n, m})  - c_{n-1, m}(\sigma_mc_{n, m} + \lambda_ms_{n, m}) } s_{n, m} \\
&+ \frac{ R_{n+1, m}^{\ell-1}s_{n-1, m} - D_{n-1, m}^{\ell-1} (\sigma_mc_{n, m} + \lambda_ms_{n, m}) }
{ s_{n-1, m} (\sigma_ms_{n, m} +  \lambda_mc_{n, m}) - c_{n-1, m}  (\sigma_mc_{n, m} + \lambda_ms_{n, m}) } c_{n, m}
\\
= & \frac{ \sigma_m(s_{n, m}^2- c_{n, m}^2)D_{n-1, m}^{\ell-1} + (s_{n-1, m}c_{n, m} - c_{n-1, m}s_{n, m})R_{n+1, m}^{\ell-1} }
{ s_{n-1, m}(\sigma_ms_{n, m} +  \lambda_mc_{n, m})  - c_{n-1, m}(\sigma_mc_{n, m} + \lambda_ms_{n, m}) }.
\end{aligned}\]
Once again, we have the same denominator as in $R_{n, m}^\ell$, and the numerators are similar to those in $R_{n, m}^\ell$. Using the same computations as for $R_{n, m}^\ell$, we find
\begin{equation}\label{eq:Dn}
D_{n, m}^\ell = z_{n, m}^\ell(t_n) = C_{2, m}(\Delta t) D_{n-1, m}^{\ell-1} - \nu C_{1, m}(\Delta t) R_{n+1, m}^{\ell-1}.
\end{equation}

For the first time interval $n = 1$, the initial condition in~\eqref{eq:PSErrQ1ODE2}, $z_{1, m}^\ell(t_0) = 0$, gives directly $z_{1, m}^\ell(t) = A_{1, m}^\ell \sinh(\sigma_m t)$. The transmission condition $\dot z_{1, m}^\ell(t_1) + \lambda_m z_{1, m}^\ell(t_1) = R_{2, m}^{\ell-1}$ then gives $A_{1, m}^\ell = R_{2, m}^{\ell-1}/(\sigma_m c_{1, m} + \lambda_m s_{1, m})$.
Evaluating $z_{1, m}^\ell(t)$ at $t_1$ gives
\begin{equation}\label{eq:D1}
D_{1, m}^\ell = z_{1, m}^\ell(t_1) = \frac{ s_{1, m} }{ \sigma_mc_{1, m} + \lambda_ms_{1, m} } R_{2, m}^{\ell-1} = -\nu C_{1, m}(\Delta t) R_{2, m}^{\ell-1},
\end{equation}
since $t_1 = \Delta t$, then $s_{1, m} = \sinh(\sigma_m \Delta t)$ and $c_{1, m} = \cosh(\sigma_m \Delta t) $. 

For the last time interval $n=N$, the final condition in~\eqref{eq:PSErrQNODE2} $\dot z_{N, m}^\ell(t_N) + \lambda_m z_{N, m}^\ell(t_N) = 0$ gives $B_{N, m}^\ell = -A_{N, m}^\ell (\sigma_mc_{N, m} + \lambda_ms_{N, m})/(\sigma_ms_{N, m} +  \lambda_mc_{N, m})$.
The transmission condition $z_{N, m}^\ell(t_{N-1}) =D_{N-1, m}^{\ell-1}$ then gives
\[\begin{aligned}
A_{N, m}^\ell = &\frac{\sigma_ms_{N, m} +  \lambda_mc_{N, m}}{s_{N-1, m}(\sigma_ms_{N, m} +  \lambda_mc_{N, m}) - c_{N-1, m}(\sigma_mc_{N, m} + \lambda_ms_{N, m})} D_{N-1, m}^{\ell-1}
\\
= &-\frac{\sigma_ms_{N, m} +  \lambda_mc_{N, m}}{\sigma_m\cosh(\sigma_m\Delta t) + \lambda_m\sinh (\sigma_m\Delta t)}D_{N-1, m}^{\ell-1}.
\end{aligned}\]
Evaluating $\dot z_{N, m}^\ell(t) + \lambda_m z_{N, m}^\ell(t)$ at $t_{N-1}$ gives
\begin{equation}\label{eq:RN}
\begin{aligned}
 R_{N, m}^\ell  =\; & \dot z_{N, m}^\ell(t_{N-1}) + \lambda_m z_{N, m}^\ell(t_{N-1})
\\
=\; &\frac{-(\sigma_ms_{N, m} +  \lambda_mc_{N, m})(\sigma_mc_{N-1, m} + \lambda_ms_{N-1, m})}{ \sigma_m\cosh(\sigma_m\Delta t) + \lambda_m\sinh (\sigma_m\Delta t) } D_{N-1, m}^{\ell-1}
\\ 
&+ \frac{(\sigma_mc_{N, m} + \lambda_ms_{N, m})(\sigma_ms_{N-1, m} + \lambda_mc_{N-1, m})}{\sigma_m\cosh(\sigma_m\Delta t) + \lambda_m\sinh (\sigma_m\Delta t)} D_{N-1, m}^{\ell-1}
\\ 
=\; &-\frac{ (\sigma_m^2 - \lambda_m^2)(c_{N-1, m}s_{N, m} - s_{N-1, m}c_{N, m}) }
{ \sigma_m\cosh(\sigma_m\Delta t) + \lambda_m\sinh (\sigma_m\Delta t) } D_{N-1, m}^{\ell-1} = C_{1, m}(\Delta t) D_{N-1, m}^{\ell-1}.
\end{aligned}
\end{equation}

With the help of $R_{n, m}^\ell$ and $D_{n, m}^\ell$, we can write the iteration in a compact form. As illustrated in Figure~\ref{fig:decomposition}, there are $N-1$ interfaces, $t_1, \ldots, t_{N-1}$, where the time PSM exchanges information between two neighbouring time intervals at each iteration $\ell$. For each $t_n$, we have a Robin condition $R_{n+1, m}^\ell$, which by definition is $\dot{z}_{n+1, m}^\ell(t_{n}) + \lambda_m z_{n+1, m}^\ell(t_{n})$, in the time interval $(t_n, t_{n+1})$. Similarly, we have a Dirichlet condition $D_{n, m}^\ell$, which by definition is $z_{n, m}^\ell(t_n)$, in the time interval $(t_{n-1}, t_n)$. Using~\eqref{eq:Rn} and~\eqref{eq:Dn}, we find, for $n\in\{2, \ldots, N-2\}$, that
\[\begin{aligned}
R_{n+1, m}^\ell &= C_{1, m}(\Delta t) D_{n, m}^{\ell-1} + C_{2, m}(\Delta t) R_{n+2, m}^{\ell-1},\\ 
D_{n, m}^\ell &= C_{2, m}(\Delta t) D_{n-1, m}^{\ell-1} - \nu C_{1, m}(\Delta t) R_{n+1, m}^{\ell-1}.
\end{aligned}\]
Writing in the matrix-vector form gives
\[\begin{bmatrix}
R_{n+1, m}^\ell\\
D_{n, m}^\ell
\end{bmatrix}
=T_{l, m}\begin{bmatrix}
R_{n, m}^{\ell-1}\\
D_{n-1, m}^{\ell-1}
\end{bmatrix}
+
T_{d, m}\begin{bmatrix}
R_{n+1, m}^{\ell-1}\\
D_{n, m}^{\ell-1}
\end{bmatrix}
+
T_{r, m}\begin{bmatrix}
R_{n+2, m}^{\ell-1}\\
D_{n+1, m}^{\ell-1}
\end{bmatrix},
\]
with three auxiliary two-by-two block matrices,
\[T_{l, m} := \begin{bmatrix}
0 & 0\\
0 & C_{2, m}(\Delta t)
\end{bmatrix},
T_{d, m} := \begin{bmatrix}
0 & C_{1, m}(\Delta t)\\
-\nu C_{1, m}(\Delta t) & 0
\end{bmatrix},
T_{r, m} := \begin{bmatrix}
C_{2, m}(\Delta t) & 0\\
0 & 0
\end{bmatrix}.\]
Note that $C_{2, m}(\Delta t)$ only appears in the diagonal block, and $C_{1, m}(\Delta t)$ only in the off-diagonal blocks. Using next~\eqref{eq:D1} and~\eqref{eq:RN}, we have similarly for $n = 1$ and $n = N-1$ that,
\[\begin{aligned}
\begin{bmatrix}
R_{2, m}^\ell\\
D_{1, m}^\ell
\end{bmatrix}
&=T_{d, m}\begin{bmatrix}
R_{2, m}^{\ell-1}\\
D_{1, m}^{\ell-1}
\end{bmatrix}
+
T_{r, m}\begin{bmatrix}
R_{3, m}^{\ell-1}\\
D_{2, m}^{\ell-1}
\end{bmatrix},
\\
\begin{bmatrix}
R_{N, m}^\ell\\
D_{N-1, m}^\ell
\end{bmatrix}
&=T_{l, m}\begin{bmatrix}
R_{N-1, m}^{\ell-1}\\
D_{N-2, m}^{\ell-1}
\end{bmatrix}
+
T_{d, m}\begin{bmatrix}
R_{N, m}^{\ell-1}\\
D_{N-1, m}^{\ell-1}
\end{bmatrix}.
\end{aligned}\]
We define the vector $\boldsymbol{e}^\ell_{m} := (R_{2, m}^\ell, D_{1, m}^\ell, \, \ldots, \, R_{n+1, m}^\ell, D_{n, m}^\ell, \, \ldots, \, R_{N, m}^\ell, \, D_{N-1, m}^\ell)^\top \in \mathbb{R}^{2(N-1)}$ and the block Toeplitz matrix $T^{\text{PS}}_{N,m} \in \mathbb{R}^{2(N-1)\times 2(N-1)}$ given by
\[T^{\text{PS}}_{N,m} := \begin{bmatrix}
T_{d, m} & T_{r, m}\\
T_{l, m} & T_{d, m} & T_{r, m}\\
& \ddots & \ddots & \ddots\\
& & T_{l, m} & T_{d, m} & T_{r, m}\\
& & & T_{l, m} & T_{d, m}
\end{bmatrix}.\]
We can then write the iteration as $\boldsymbol{e}^\ell_{m} = T^{\text{PS}}_{N, m}\boldsymbol{e}^{\ell-1}_{m}$, which describes how the frequency $m$ evolves along the iterations. Note that the matrix $T^{\text{PS}}_{N,m}$ depends on the penalization parameter $\nu$ and on the time interval length $\Delta t$. We omit these dependencies in the next section for brevity.

\section{Scalability analysis}\label{sec:3}
The goal of this section is to prove that the time PSM applied to the optimality system~\eqref{eq:reduced} is weakly scalable, that is, its convergence rate does not depend on the number of fixed-size time intervals $N$. More precisely, we show that the spectral radius of $T^{\text{PS}}_{N,m}$ denoted by $\rho(T^{\text{PS}}_{N,m})$, which characterizes the \textit{asymptotic} convergence rate of the iterative method, see~\cite[Chapter 2.3]{ciaramella2022iterative}, is uniformly bounded below one with respect to $N$ for every $m\in\{1,\dots,M\}$.

We achieve our goal using two different techniques. The first approach, presented in Section \ref{sec:matrixnorm}, relies on the property that for any matrix norm $\|\cdot\|$,
\begin{equation}\label{eq:property_matrixnorm}
\rho(T^{\text{PS}}_{N,m})\leq \|T^{\text{PS}}_{N,m}\|.
\end{equation}
We therefore explicitly construct a special matrix norm $|||\cdot|||$ which verifies $|||T^{\text{PS}}_{N, m}|||\leq C<1$, with a constant $C$ \textit{independent} on $N$. 

The second approach, discussed in Section \ref{sec:Toeplitz}, relies instead on the theory of block Toeplitz matrices and allows us to derive two complementary results.
The first one is \textit{nonasymptotic}: we identify a region in the complex plane $\mathcal{D}$ that contains all eigenvalues of $T^{\text{PS}}_{N,m}$ for every $N\geq 2$. In particular, the modulus of elements in $\mathcal{D}$ is bounded by the same expression derived for $|||T^{\text{PS}}_{N, m}|||$ in Section \ref{sec:matrixnorm}, thereby recovering the same conclusion obtained there. The second result is a characterization of the \textit{asymptotic} spectrum of $T^{\text{PS}}_{N,m}$ as $N\rightarrow \infty$. Note that these two results are complementary to each other. The nonasymptotic result guarantees that the time PSM converges and is weakly scalable for every $N$. The asymptotic one provides information on the spectrum distribution for large $N$, which may provide further insights into the convergence behaviour of the iterative scheme.

To improve readability, in what follows we omit to explicit the dependence of $T^{\text{PS}}_{N,m}$ and of its submatrices on the $m-$th eigenvalue, and use instead the shorthand notations $T^{\text{PS}}_N$, $T_l$, $T_d$, and $T_r$.

\subsection{Analysis with special matrix norm}\label{sec:matrixnorm}
Since we have a closed form of each block matrix in $T^{\text{PS}}_N$, a first attempt in verifying~\eqref{eq:property_matrixnorm} consists in computing the infinity norm\footnote{Due to the particular structure of  $T^{\text{PS}}_N$, the same calculations remain valid for the $1$-norm.} of $T^{\text{PS}}_N$. A direct calculation leads to
\[\begin{aligned}
& \|T^{\text{PS}}_N\|_{\infty} = \max\big\{\|T_d + T_r\|_{\infty}, \ \|T_l + T_d + T_r\|_{\infty}, \ \|T_l + T_d\|_{\infty} \big\} 
\\
&= \max\big\{\nu |C_{1, m}(\Delta t)|, \ |C_{1, m}(\Delta t)|, \ |C_{1, m}(\Delta t)| + |C_{2, m}(\Delta t)|, \ |C_{2, m}(\Delta t)| + \nu |C_{1, m}(\Delta t)| \big\} 
\\
&= \max\big\{|C_{1, m}(\Delta t)| + |C_{2, m}(\Delta t)|, \ |C_{2, m}(\Delta t)| + \nu |C_{1, m}(\Delta t)|\big\}
\\
&= \left\{
\begin{aligned}
&|C_{1, m}(\Delta t)| + |C_{2, m}(\Delta t)|,& \text{if }& 0 < \nu \leq 1,
\\
\nu&|C_{1, m}(\Delta t)| + |C_{2, m}(\Delta t)|,& \text{if }& \nu > 1,
\end{aligned}
\right.
\\
&= \left\{
\begin{aligned}
&\frac{\sigma_m + \nu^{-1}\sinh(\sigma_m \Delta t)}{\sigma_m\cosh(\sigma_m \Delta t) + \lambda_m\sinh (\sigma_m \Delta t)},& \text{if }& 0 < \nu \leq 1,
\\
&\frac{\sigma_m + \sinh(\sigma_m \Delta t)}{\sigma_m\cosh(\sigma_m \Delta t) + \lambda_m\sinh (\sigma_m \Delta t)},& \text{if }& \nu > 1,
\end{aligned}
\right.
\end{aligned}\]
where $\lambda_m, \sigma_m, \Delta t$ are all positive. Although the infinity norm of the iteration matrix is independent of the number of time intervals $N$, it is not always smaller than one, especially when the penalization parameter $\nu$ is very small, see e.g., Figure~\ref{fig:normbounds}. Hence, the infinity norm of the iteration matrix $T^{\text{PS}}_N$ is not suitable for our purpose.

To present an alternative to estimate the spectral radius $\rho(T^{\text{PS}}_N)$ using matrix norm, we first introduce an invertible and positive definite block diagonal matrix $D\in \mathbb{R}^{2(N-1)\times 2(N-1)}$,
\[D := \begin{bmatrix}
D_d& \\
& \ddots\\
& & D_d
\end{bmatrix},
\quad
\text{ with }
\quad
D_d:=\begin{bmatrix}
1 & 0\\
0 & \sqrt{\nu}
\end{bmatrix}.\]
We then define a novel matrix norm as
\[|||T^{\text{PS}}_N||| := \max\limits_{i=1,\dots,2(N-1)}\left( \sum_{j=1}^{2(N-1)} \left(D^{-1} T^{\text{PS}}_ND\right)_{ij}^2\right)^{\frac{1}{2}},\]
which consists of a modified infinity norm applied to a similarity transformation of $ T^{\text{PS}}_N$ through the matrix $D$.

\begin{theorem}\label{thm:matrixanalysis}
The time parallel Schwarz method~\eqref{eq:PSQ2-QN-1}-\eqref{eq:PSQN} is weakly scalable in the sense that for any $\nu>0$, $\Delta t>0$ and $\lambda_m>0$, there exists a constant $C>0$, independent on $N$, such that  
\[\rho(T^{\text{PS}}_N )\leq C < 1,\]
for any $N$.
\end{theorem}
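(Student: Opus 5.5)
The plan is to compute the similarity-transformed matrix $\widetilde T := D^{-1}T^{\text{PS}}_N D$ block by block. Since $\rho(\widetilde T)=\rho(T^{\text{PS}}_N)$, I will then bound $\rho(\widetilde T)$ by the largest Euclidean row norm of $\widetilde T$. The point of this bound is that it does not depend on $N$.

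\emph{Step 1: the transformed blocks.} A direct computation with $D_d=\mathrm{diag}(1,\sqrt\nu)$ gives
\[
D_d^{-1}T_lD_d=T_l,\qquad D_d^{-1}T_rD_d=T_r,\qquad D_d^{-1}T_dD_d=\begin{bmatrix}0&\sqrt\nu C_{1,m}\\ -\sqrt\nu C_{1,m}&0\end{bmatrix}.
\]
So the rescaling symmetrizes the off-diagonal coupling, which is exactly the role of $D$. Every interior row of $\widetilde T$ has two nonzero entries: the row of $R_{n+1}$ has $\sqrt\nu C_{1,m}$ and $C_{2,m}$, and the row of $D_n$ has $C_{2,m}$ and $-\sqrt\nu C_{1,m}$. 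The first and last block rows have one of these entries removed. Hence
\[
|||T^{\text{PS}}_N|||=\sqrt{\nu C_{1,m}^2+C_{2,m}^2}=\frac{\sqrt{\sigma_m^2+\nu^{-1}\sinh^2(\sigma_m\Delta t)}}{\sigma_m\cosh(\sigma_m\Delta t)+\lambda_m\sinh(\sigma_m\Delta t)}=:C,
\]
and $C$ is independent of $N$.

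\emph{Step 2: justifying $\rho\le|||\cdot|||$.} I expect this to be the main obstacle. A maximal row $2$-norm is not obviously submultiplicative, so \eqref{eq:property_matrixnorm} cannot simply be quoted. My first attempt is to show that the rows of $\widetilde T$ are mutually orthogonal. The rows of $R_{n+1}$ and $D_{n+1}$ share the columns $D_n$ and $R_{n+2}$, and their inner product is $\sqrt\nu C_{1,m}C_{2,m}-C_{2,m}\sqrt\nu C_{1,m}=0$. Every other pair of rows has disjoint support. Therefore $\widetilde T\widetilde T^\top$ is diagonal with entries at most $C^2$. This gives $\|\widetilde T\|_2\le C$, and hence $\rho(T^{\text{PS}}_N)=\rho(\widetilde T)\le\|\widetilde T\|_2\le C$. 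If the support bookkeeping at the boundary rows turns out to be delicate, I would instead check it after permuting $\widetilde T$ into its decoupled chains, and I expect the same conclusion.

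\emph{Step 3: $C<1$.} I will square the inequality and use $\sigma_m^2-\nu^{-1}=\lambda_m^2$ together with $\cosh^2=1+\sinh^2$. Writing $s=\sinh(\sigma_m\Delta t)$ and $c=\cosh(\sigma_m\Delta t)$, the difference of the squared denominator and squared numerator becomes
\[
(\sigma_m c+\lambda_m s)^2-\sigma_m^2-\nu^{-1}s^2=2\lambda_m^2 s^2+2\sigma_m\lambda_m s c>0
\]
for $\lambda_m,\Delta t>0$. Hence $C<1$ uniformly in $N$, which is the claimed weak scalability.
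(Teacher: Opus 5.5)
Your proposal is correct. Its skeleton matches the paper's: the same scaling $D$, the same blocks of $\widetilde T=D^{-1}T^{\text{PS}}_N D$, the same squared maximal row norm $\nu C_{1,m}^2+C_{2,m}^2$, and the same algebra for $C<1$. The paper's expanded denominator is the numerator plus $2\lambda_m^2\sinh^2(\sigma_m\Delta t)+\sigma_m\lambda_m\sinh(2\sigma_m\Delta t)$, which is exactly your difference.

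The genuine difference is your Step 2, and there you supply something the paper leaves out. The paper simply applies \eqref{eq:property_matrixnorm} to $|||\cdot|||$. A maximal Euclidean row norm is the $\ell^2\to\ell^\infty$ operator norm, which is not submultiplicative and does not bound the spectral radius in general. For example, the $n\times n$ all-ones matrix has spectral radius $n$ but maximal row norm $\sqrt n$. In general one only gets $\rho(T^{\text{PS}}_N)\le\sqrt{2(N-1)}\,|||T^{\text{PS}}_N|||$, which is useless for weak scalability.

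Your observation closes this gap. The rows of $\widetilde T$ are pairwise orthogonal: the only rows with overlapping support are the $R_{n+1}$ and $D_{n+1}$ rows, with inner product $\sqrt\nu C_{1,m}C_{2,m}-C_{2,m}\sqrt\nu C_{1,m}=0$, and the boundary rows cause no trouble. Hence $\widetilde T\widetilde T^\top$ is diagonal and $\|\widetilde T\|_2=|||T^{\text{PS}}_N|||$.

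Your argument also gives more than the theorem states. You get $\|D^{-1}\boldsymbol e^\ell_m\|_2\le C^\ell\|D^{-1}\boldsymbol e^0_m\|_2$, a contraction at every iteration in a fixed weighted norm, not only an asymptotic rate.

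One minor point: for $N=2$ the matrix reduces to $\bar T_d$ and the maximal row norm is $\sqrt\nu|C_{1,m}|$, so Step 1 should read $\le C$ rather than $=C$.

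The paper's rigorous counterpart to your Step 2 is its Toeplitz section, which reaches the same bound through Tilli's extended numerical range theorem. That route is heavier, and it yields only the spectral bound, not the per-iteration contraction.
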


\begin{proof}
To compute explicitly $D^{-1}T^{\text{PS}}_N D$, it is convenient to manipulate separately the diagonal $ T^{\text{PS}}_{N,\text{diag}}$ and off-diagonal $ T^{\text{PS}}_{N,\text{off}}$ parts of $ T^{\text{PS}}_N$. For the diagonal part, we have
\[D^{-1}T^{\text{PS}}_{N,\text{diag}}D = 
\begin{bmatrix}
\bar T_d& \\
& \ddots\\
& &\bar T_d 
\end{bmatrix},
\quad
\bar T_d := D_d^{-1} T_d D_d = 
\begin{bmatrix}
0& \sqrt{\nu}C_{1, m}(\Delta t)\\
-\sqrt{\nu} C_{1, m}(\Delta t)& 0
\end{bmatrix}.\]
For the off-diagonal part, we find
\[D^{-1}T^{\text{PS}}_{N,\text{off}}D = 
\begin{bmatrix}
& T_r& \\
T_l& & T_r\\
& \ddots& & \ddots\\
& & T_l& & T_r\\
& & & T_l&
\end{bmatrix},\]
since $D_d^{-1} T_l D_d = T_l$ and $D_d^{-1}T_r D_d = T_r$.
Collecting the diagonal and off-diagonal contributions, we obtain
\[D^{-1}T^{\text{PS}}_ND = 
\begin{bmatrix}
\bar T_d & T_r& \\
T_l& \bar T_d & T_r\\
& \ddots& & \ddots\\
& & T_l& \bar T_d & T_r\\
& & & T_l& \bar T_d
\end{bmatrix}.\]
A direct calculation then leads to
\begin{equation}\label{eq:definitionrhotilde}
\begin{aligned}
|||T^{\text{PS}}_N|||^{2} = &\max\left\{\nu \left(C_{1,m}(\Delta t)\right)^2, \ \nu \left(C_{1,m}(\Delta t)\right)^2+\left(C_{2,m}(\Delta t)\right)^2\right\}
\\
= &\nu \left(C_{1,m}(\Delta t)\right)^2+\left(C_{2,m}(\Delta t)\right)^2
\\
= &\frac{\sigma_m^2+\nu^{-1}\sinh^{2}(\sigma_m\Delta t)}{\left(\sigma_m\cosh(\sigma_m\Delta t)+\lambda_m\sinh(\sigma_m\Delta t)\right)^2} =:\widetilde \rho(m),
\end{aligned}
\end{equation}
where in the last step we used~\eqref{eq:expression_C1C2}. Note that $\widetilde \rho(m)$ {\em does not} depend on the number of time intervals $N$. Additionally, expanding the denominator, we have
\[\begin{aligned}
\widetilde \rho(m)&=\frac{\sigma_m^2 +\nu^{-1}\sinh^2(\sigma_m\Delta t)}{\sigma_m^2\cosh^2(\sigma_m\Delta t)+\lambda_m^2\sinh^2(\sigma_m\Delta t)+2\sigma\lambda_m\sinh(\sigma_m\Delta t)\cosh(\sigma_m\Delta t)}\\
&=\frac{\lambda_m^2+\nu^{-1}\cosh^2(\sigma_m\Delta t)}{\lambda_m^2+\nu^{-1}\cosh^2(\sigma_m\Delta t)+2\lambda_m^2\sinh^2(\sigma_m\Delta t)+\sigma_m\lambda_m\sinh^2(2(\sigma_m\Delta t))},
\end{aligned}\]
where we used the properties of the hyperbolic functions and the fact that $\sigma_m^2 = \lambda_m^2 + \nu^{-1}$ in the second equality. Furthermore, the function $\widetilde \rho$ is strictly decreasing in $\lambda_m$ and reaches its maximum over the positive real line for $\lambda_m=0$, where $\widetilde \rho$ equals one. Hence, for any positive $\lambda_m$, $\nu$ and $\Delta t$, $\widetilde \rho(m)<1$, which implies that $||| T^{\text{PS}}_N|||<1$ uniformly in $N$. This concludes the proof, and in addition, proves the convergence of the time parallel Schwarz method.
\end{proof}

\subsection{Analysis based on block Toeplitz matrix theory}\label{sec:Toeplitz}
Our second approach to analyze the scalability of the time PSM leverages the theory of block Toeplitz matrices, see, e.g.,  \cite{Bottcher1997} and \cite[Chapter 2]{Trefethen2005} for an introduction.
Our goal is now twofold. First, we wish to show through this alternative path that the spectral radius $\rho(T^{\text{PS}}_N)$ remains strictly bounded below one as $N\rightarrow \infty$. Second, we aim to characterize the asymptotic distribution of the eigenvalues of $T^{\text{PS}}_N$.

We start by considering the tridiagonal block Toeplitz matrix $T^\text{PS}_N$. To align our analysis with the literature on block Toeplitz operators, we define the block entries $A_0:= T_d$, $A_{-1}:= T_r$, and $A_1:= T_l$. 

We associated with $T^\text{PS}_N$ the Laurent operator $T$ acting on the space of sequences 
\[\ell^2(\mathbb{Z},\mathbb{C}^2):=\left\{\mathbf{x}=(\mathbf{x}_j)_{j\in\mathbb{Z}},\;\mathbf{x}_j\in \mathbb{C}^2,\; \sum_{j\in\mathbb{Z}} \|\mathbf{x}_j\|^2<\infty \right\},\] and defined by the bi-infinite block-matrix
\[
T :=
\begin{pmatrix}
\ddots & \ddots &    \ddots    &        &        \\
       & A_1    & A_0    & A_{-1} &        \\
       & & A_1    & A_ 0   & A_{-1} &  \\
       &        &   & \ddots  & \ddots &  \ddots
\end{pmatrix}.
\]
Let $(A_j)_{j\in\mathbb{Z}}$ denote the sequence of matrix coefficients where, in our case, $A_j\equiv 0$ if $|j|>1$. The action of $T$ onto a sequence $\mathbf{x}$ can be expressed as the discrete convolution $(T\mathbf{x})_i=\sum_{j\in\mathbb{Z}} A_{i-j}\mathbf{x}_j$. By applying the time discrete Fourier transform, $\mathcal{F}(\mathbf{x})=\sum_{j\in\mathbb{Z}} \mathbf{x}_je^{-ij\theta}=:\widehat{\mathbf{x}}(\theta)$, one obtains the fundamental relation $\mathcal{F}\left(T \mathbf{x}\right)(\theta)=F(\theta)\widehat{\mathbf{x}}(\theta)$, where
$F:[-\pi,\pi]\rightarrow \mathbb{C}^{2\times 2}$ is the matrix-valued symbol of $T$,
\begin{equation}\label{eq:Ftheta}
F(\theta):=A_{-1} e^{-i\theta} + A_0  +A_1 e^{i\theta}=
\begin{pmatrix}
C_{2,m}(\Delta t)e^{-i\theta} & \sqrt{\nu} C_{1,m}(\Delta t)\\
-\sqrt{\nu} C_{1,m}(\Delta t) & C_{2,m}(\Delta t)e^{i\theta}
\end{pmatrix},
\end{equation}
for any $\theta\in [-\pi,\pi]$. The symbol $F$ generates the operator $T$ in the sense that the coefficients $A_j$ are recovered by
\[A_j=\frac{1}{2\pi}\int_{-\pi}^\pi F(\theta)e^{-i j\theta}d\theta,\]
and our matrix $T^{\text{PS}}_N$ coincides with the finite Toeplitz matrix $T_{n}=(A_{j-i})_{i,j=1}^{n}$ for $n=N-1$, where $A_j\equiv 0$ if $|j|>1$ in our case.

The spectral properties of Laurent operators are well-established. Specifically, the spectrum\footnote{For an infinite dimensional operator $T$, the spectrum is the set of values $\lambda\in \mathbb{C}$ such that $(T-\lambda I)^{-1}$ is not boundedly invertible.} of $T$, denoted by $\sigma(T)$, coincides with the union of the spectra of $F(\theta)$ for $\theta \in [-\pi,\pi]$,
\begin{equation}\label{eq:spectrumT}
\sigma(T):=\cup_{\theta\in [-\pi,\pi]} \, \sigma\left(F(\theta)\right).
\end{equation}
A direct calculation reveals that the two eigenvalues of $F(\theta)$ are 
\begin{equation}\label{eq:spetrumFtheta}
\mu_{\pm}(\theta)=C_{2,m}(\Delta t)\cos(\theta)\pm i \sqrt{\left(C_{2,m}(\Delta t)\sin(\theta)\right)^2+\nu (C_{1,m}(\Delta t))^2},
\end{equation}
so that $\sigma(T)$ consists of two, distinct, closed curves in the complex plane
\begin{equation}\label{eq:explicitdescriptionSigmaT}
\sigma(T)=\{z\in\mathbb{C}:\; z= \mu_{+}(\theta),\;\theta\in [-\pi,\pi]\}\cup \{z\in\mathbb{C}:\; z= \mu_{-}(\theta),\;\theta\in [-\pi,\pi]\}.
\end{equation}

The relationship between the eigenvalues of the finite Toeplitz matrix $T_n$, denoted by $\{\widetilde \lambda_j\}_{j=1}^{2n}$, and those of its Laurent operator has been the subject of extensive research. To provide a brief overview, we recall the definition of an eigenvalue cluster \cite{schmidt1960toeplitz,tilli1999some,tyrtyshnikov1996unifying}.
\begin{definition}[Eigenvalue cluster]\label{def:eigenvaluecluster}
A subset $\Phi\subset \mathbb{C}$ is called an \textit{eigenvalue cluster} for the sequence $\{T_n\}_{n\in\mathbb{N}}$ if for any open set $\mathcal{O}$, $\Phi\subset \mathcal{O}$, the number of eigenvalues of $T_n$ that lie outside $\mathcal{O}$ is a $o(n)$ as $n\rightarrow \infty$.
\end{definition}
A landmark theorem by Szeg\H{o} \cite[Chapter 5.2]{grenander2001toeplitz} states that for a Hermitian Toeplitz matrix generated by a bounded real-valued symbol $f:[-\pi,\pi]\rightarrow \mathbb{R}$, i.e. $A_j\in\mathbb{C}$ and $A_j=A^{\star}_{j}$, with eigenvalues $\{\widetilde \lambda_j\}_{j=1}^n$, the relation
\begin{equation}\label{eq:Szego}
\lim_{n\rightarrow \infty} \frac{1}{n}\sum_{j=1}^n g(\widetilde \lambda_j)=\frac{1}{2\pi}\int_{-\pi}^\pi g(f(\theta))d\theta,
\end{equation}
holds for any continuous function $g$.
By choosing suitable test functions $g$, the relation~\eqref{eq:Szego} implies that $\sigma(T)$ is an eigenvalue cluster for $\{T_n\}_{n\in\mathbb{N}}$.
Szeg\H{o}'s result has been extended to Hermitian Toeplitz matrices with matrix-valued symbols \cite{miranda2000asymptotic} and to non-Hermitian with special structure (e.g., banded) \cite{hirschman1967spectra, schmidt1960toeplitz,MR219986}. For general non-Hermitian matrices, with possibly matrix-valued symbols, it is known that \eqref{eq:Szego} does in general not hold, unless one restricts $g$ to the class of holomorphic functions over a suitable region of the complex plane or additional assumptions are imposed on the range of the symbol $F$ \cite{tilli1998singular,tilli1999some}.

To start our analysis, we recall the notion of numerical range\footnote{As the symbol and its eigenvalues studied in this section depend continuously on $\theta$, we used a simplified notation compared to the works devoted to general measurable symbols, see, e.g., \cite{tilli1998singular}.}. For a square matrix $A\in\mathbb{C}^{k\times k}$, the numerical range is defined as
\begin{equation}\label{eq:numericalrangeMatrix}
\R(A):=\left\{\frac{\langle A \boldsymbol x, \boldsymbol x\rangle}{\|\boldsymbol x\|^2}:\; \boldsymbol x\in\mathbb{C}^k,\; \boldsymbol x\neq 0\right\}.
\end{equation}
In the scalar case, where $f:[-\pi,\pi]\rightarrow \mathbb{C}$, the numerical range is set equal to the image of the function,
\[\R(f):=\left\{z\in\mathbb{C}:z=f(\theta),\;\theta\in [-\pi,\pi]\right\}.\]
For a matrix-valued function $F:[-\pi,\pi]\rightarrow \mathbb{C}^{k\times k}$, its numerical range is defined as
\[\R(F):=\cup_{\theta\in [-\pi,\pi]} \R(F(\theta)).\]
For our analysis, we need the additional concept of
\textit{extended} numerical range of a matrix-valued function $F$ \cite[Definition 5.1]{tilli1998singular}, defined as
\[\mathcal{E}\R(F):=\cap_{\mathcal{H}\in\Theta} \mathcal{H},\]
where $\Theta$ is the family of all closed half planes $\mathcal{H}\subset \mathbb{C}$ such that $\R(F(\theta))\subset \mathcal{H}$ holds for every $\theta\in[-\pi,\pi]$.

\begin{theorem}
For any $n\in\mathbb{N}$, the eigenvalues of $T_n$ are included in the region of complex plane,
\[\begin{aligned}
\mathcal{D}:=\Bigl\{z\in\mathbb{C}:\;\exists \theta\in [-\pi,\pi]\text{ s.t. } &\Re z = C_{2,m}(\Delta t)\cos(\theta),\\
& |\Im z| \leq \sqrt{(C_{2,m}(\Delta t)\sin(\theta))^2+\nu(C_{1,m}(\Delta t))^2 }\Bigr\}.
\end{aligned}\]
In particular, the spectral radius of $ T^{\text{PS}}_{N}$ is bounded by $\widetilde \rho(m)$ below one, uniformly with respect to $N$. 
\end{theorem}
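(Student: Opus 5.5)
The plan is to avoid heavy Toeplitz machinery and use a numerical range argument. Every eigenvalue of $T_n$ lies in its numerical range $\R(T_n)$. Because $T_n$ is a compression of the Laurent operator $T$, I expect $\R(T_n)$ to lie in the closed convex hull of $\R(F)$, which is contained in $\mathcal{E}\R(F)$; this is the finite-section version of the fact from \cite{tilli1998singular}. So it suffices to compute $\R(F(\theta))$ explicitly, show that the union over $\theta$ is exactly $\mathcal{D}$, and check that $\mathcal{D}$ is convex and closed.

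\emph{Step 1 (numerical range of the symbol).} Decompose \eqref{eq:Ftheta} as
\[
F(\theta)=C_{2,m}(\Delta t)\cos(\theta)\, I+i\,H(\theta),\qquad
H(\theta)=\begin{pmatrix}-C_{2,m}(\Delta t)\sin\theta & -i\sqrt{\nu}C_{1,m}(\Delta t)\\ i\sqrt{\nu}C_{1,m}(\Delta t) & C_{2,m}(\Delta t)\sin\theta\end{pmatrix}.
\]
Here $H(\theta)$ is Hermitian, since the real skew-symmetric coupling block contributes nothing to the Hermitian part of $F(\theta)$. Hence for every unit vector $\boldsymbol x$ we have $\Re\langle F(\theta)\boldsymbol x,\boldsymbol x\rangle=C_{2,m}(\Delta t)\cos\theta$. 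The imaginary part $\langle H(\theta)\boldsymbol x,\boldsymbol x\rangle$ sweeps the interval between the eigenvalues of $H(\theta)$, namely $\pm\sqrt{(C_{2,m}\sin\theta)^2+\nu C_{1,m}^2}$. Thus $\R(F(\theta))$ is the vertical segment joining $\mu_\pm(\theta)$ from \eqref{eq:spetrumFtheta}, and $\R(F)=\mathcal{D}$.

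\emph{Step 2 (convexity of $\mathcal{D}$).} Set $x=C_{2,m}\cos\theta$, so that $(C_{2,m}\sin\theta)^2=C_{2,m}^2-x^2$. Then
\[
\mathcal{D}=\bigl\{z:\ |\Re z|\le C_{2,m}(\Delta t),\ |z|^2\le C_{2,m}(\Delta t)^2+\nu C_{1,m}(\Delta t)^2\bigr\}.
\]
This is a closed disk intersected with a closed vertical strip, hence closed and convex. Consequently $\mathcal{D}=\mathcal{E}\R(F)$.

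\emph{Step 3 (eigenvalues of $T_n$).} Let $T_n\boldsymbol x=\widetilde\lambda\boldsymbol x$ with $\|\boldsymbol x\|=1$. Extend $\boldsymbol x$ by zero to a finitely supported sequence in $\ell^2(\mathbb{Z},\mathbb{C}^2)$. Then $\widetilde\lambda=\langle T_n\boldsymbol x,\boldsymbol x\rangle=\langle T\boldsymbol x,\boldsymbol x\rangle$, and by Parseval
\[
\widetilde\lambda=\frac{1}{2\pi}\int_{-\pi}^{\pi}\langle F(\theta)\widehat{\boldsymbol x}(\theta),\widehat{\boldsymbol x}(\theta)\rangle\,d\theta .
\]
For each $\theta$ the integrand equals $\|\widehat{\boldsymbol x}(\theta)\|^2 w(\theta)$ with $w(\theta)\in\R(F(\theta))\subset\mathcal{D}$. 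The weights $\|\widehat{\boldsymbol x}(\theta)\|^2/(2\pi)$ are nonnegative and integrate to $1$. So $\widetilde\lambda$ is an average of points of the closed convex set $\mathcal{D}$, and therefore lies in $\mathcal{D}$.

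The step needing the most care is this averaging argument: justifying rigorously that an integral average of points of a closed convex set stays in it, for instance via supporting half-planes, which is exactly the definition of $\mathcal{E}\R(F)$. The computation of $\R(F(\theta))$ is routine by comparison.

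\emph{Step 4 (spectral radius bound).} Every $z\in\mathcal{D}$ satisfies $|z|^2\le C_{2,m}^2+\nu C_{1,m}^2=\widetilde\rho(m)$, the quantity from \eqref{eq:definitionrhotilde}. Note that $\widetilde\rho(m)$ is the squared norm $|||T^{\text{PS}}_N|||^2$, so the bound obtained is $\rho(T^{\text{PS}}_N)\le\sqrt{\widetilde\rho(m)}$. The statement's ``bounded by $\widetilde\rho(m)$'' should be read as this same bound $|||T^{\text{PS}}_N|||=\sqrt{\widetilde\rho(m)}$. By the proof of Theorem~\ref{thm:matrixanalysis}, $\widetilde\rho(m)<1$, hence $\sqrt{\widetilde\rho(m)}<1$ for all positive $\lambda_m,\nu,\Delta t$, uniformly in $N$.
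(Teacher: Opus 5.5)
Your argument is correct. It shares the paper's skeleton: each $\R(F(\theta))$ is the vertical segment $[\mu_-(\theta),\mu_+(\theta)]$, $\mathcal{D}$ is closed and convex, and the eigenvalues lie in the closed convex hull of $\R(F)$. The difference is that the paper simply invokes \cite[Theorem 5.1]{tilli1998singular} for the inclusion of the spectrum in $\mathcal{E}\R(F)$. You instead prove that inclusion directly via Parseval and supporting half-planes, which is essentially Tilli's proof specialized to this banded symbol and makes the argument self-contained.

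You also prove what the paper only asserts: writing $\mathcal{D}$ as the disk $|z|^2\le C_{2,m}^2+\nu C_{1,m}^2$ intersected with the strip $|\Re z|\le C_{2,m}$ genuinely establishes convexity. Moreover, Step 3 controls the whole numerical range, not just the spectrum. By Berger's power inequality $w(B^\ell)\le w(B)^\ell$ together with $\|B\|_2\le 2w(B)$, this yields the $N$-independent transient estimate $\|(D^{-1}T^{\text{PS}}_N D)^\ell\|_2\le 2\,\widetilde\rho(m)^{\ell/2}$. That is the kind of norm bound Section~\ref{sec:matrixnorm} was aiming for.

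Your correction in Step 4 is right and necessary. Since $|\mu_\pm(\theta)|^2=C_{2,m}^2+\nu C_{1,m}^2$ for every $\theta$, $\sigma(T)$ lies on the circle of radius $\sqrt{\widetilde\rho(m)}$, and $\mathcal{D}$ is exactly its convex hull. Corollary~\ref{cor:clustering} then forces $\rho(T^{\text{PS}}_N)\to\sqrt{\widetilde\rho(m)}>\widetilde\rho(m)$. So the literal bound by $\widetilde\rho(m)$ is false, while $\sqrt{\widetilde\rho(m)}$ is asymptotically sharp.

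One point you should make explicit is which matrix Step 3 is applied to. With $A_0=T_d$ (off-diagonal entries $C_{1,m}$ and $-\nu C_{1,m}$), the generated symbol is not the displayed $F$. The Toeplitz matrix generated by \eqref{eq:Ftheta} is $D^{-1}T^{\text{PS}}_N D$. Eigenvalues are similarity invariant, but numerical ranges are not. For $\nu\neq1$, the Hermitian part of the symbol of $T^{\text{PS}}_N$ itself has off-diagonal entry $(1-\nu)C_{1,m}/2$, so its numerical range leaves the strip $|\Re z|\le C_{2,m}$. Run Step 3 on $D^{-1}T^{\text{PS}}_N D$ and transfer the eigenvalue inclusion to $T^{\text{PS}}_N$ by similarity; the paper relies on the same identification implicitly.
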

\begin{proof}
The proof relies on \cite[Theorem 5.1]{tilli1998singular}, which states that, for any $n\in\mathbb{N}$, the eigenvalues of $T_n$ belongs to $\mathcal{E}\R(F)$. We are thus left to show that $\mathcal{E}\R(F)=\mathcal{D}$.
To this end, we first observe that, for a fixed $\theta\in [-\pi,\pi]$, $F(\theta)$ defined in~\eqref{eq:Ftheta} is a normal matrix, thus $\R(F(\theta))$ is the convex hull of its eigenvalues $\mu_{\pm}(\theta)$, representing geometrically a vertical segment in the complex plane. Then, $\mathcal{D}=\cup_{\theta\in [-\pi,\pi]}\R(F(\theta))$ is the region in the complex plane obtained as the union of the vertical segments centered at $C_{2,m}(\Delta t) \cos(\theta)$, $\theta\in [-\pi,\pi]$, with half-height $\sqrt{(C_{2,m}(\Delta t)\sin(\theta))^2+\nu(C_{1,m}(\Delta t))^2}$. In particular, $\mathcal{D}$ is a closed and convex set.
Take now a $z\in \mathcal{D}$. Then, $z$ belongs to $\R(F(\theta))$ for a certain value of $\theta$ and thus, by the definition of $\mathcal{E}\mathcal{R}(F)$, $z$ belongs to every $\mathcal{H}\in\Theta$. Hence, $z$ also belongs to the intersection, which implies that $z\in \mathcal{E}\mathcal{R}(F)$ and $\mathcal{D}\subset \mathcal{E}\mathcal{R}(F)$. To show that $\mathcal{E}\mathcal{R}(F)\subset \mathcal{D}$, we take a $z\notin \mathcal{D}$. Since $\mathcal{D}$ is closed and convex, there exists a closed half plane $\widetilde{\mathcal{H}}$ such that $\mathcal{D}\subset \widetilde{\mathcal{H}}$ and $z\notin \widetilde{\mathcal{H}}$, thus $z\notin \mathcal{E}\mathcal{R}(F)$. Combining these two parts, we conclude that $\mathcal{E}\mathcal{R}(F)=\mathcal{D}$.
Finally, we observe that for any $z\in \mathcal{D}$, $\|z\|$ is bounded by $\widetilde \rho(m)$ defined in \eqref{eq:definitionrhotilde}, thus proving the second claim.
\end{proof}

To prove an asymptotic result on the distribution of the eigenvalues of $\{T_n\}_{n\in\mathbb{N}}$, we recall \cite[Theorem 1.2]{donatelli2012canonical} which proves a Szeg\H{o}-type limit for a sequence of (multilevel) block Toeplitz matrices with non-Hermitian symbols. Here, the statement is reported in our simplified setting.
\begin{theorem}[Theorem 1.2, Ref. \cite{donatelli2012canonical}]\label{eq:theorem_canonical}
Let $F\in L^{\infty}([-\pi,\pi];\mathbb{C}^{k\times k})$ be a matrix-valued symbol with eigenvalues $\left\{\mu_j\right\}_{j=1}^k$. Furthermore, let $B_n\in \mathbb{C}^{nk\times nk}$ be the Toeplitz matrix generated by $F$, denote its eigenvalues by $\{\widetilde \lambda_j\}_{j=1}^{nk}$, and let $B$ be the associated Laurent operator. If $\sigma(B)$ has empty interior and does not disconnect the complex plane, then for every continuous function $g:\mathbb{C}\rightarrow \mathbb{C}$ with bounded support,
\[\lim_{n\rightarrow \infty}\frac{1}{nk}\sum_{j=1}^{nk} g(\widetilde \lambda_j)=\frac{1}{2\pi}\int_{-\pi}^\pi \frac{1}{k} \sum_{j=1}^k g(\mu_j)d\theta.\]
\end{theorem}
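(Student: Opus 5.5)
This statement is Theorem 1.2 of \cite{donatelli2012canonical}, recalled in the paper's simplified setting, so the plan is to indicate how its proof goes rather than rederive it. The approach is the standard route for non-Hermitian Szeg\H{o}-type limits. It has three steps: use the singular-value distribution to control the spectrum, establish moment asymptotics, and pass from moments to general continuous test functions using the topological hypothesis on $\sigma(B)$.

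\textbf{Step 1: singular-value and trace asymptotics.} First, invoke the Avram--Parter / Tilli theorem for $F\in L^\infty$: the singular values of $B_n$ are distributed as those of $F$. This gives the uniform bound $\|B_n\|\le\|F\|_\infty$, so all eigenvalues $\widetilde\lambda_j$ lie in a fixed disc. Second, show for each polynomial $p$ that
\[\frac{1}{nk}\operatorname{tr}\,p(B_n)\to\frac{1}{2\pi}\int_{-\pi}^{\pi}\frac1k\operatorname{tr}\,p(F(\theta))\,d\theta.\]
The argument is that $B_n^r - T_n(F^r)$ is a sum of products of Hankel-type corrections. For trigonometric-polynomial symbols it has rank $O(1)$; for general $L^\infty$ symbols it has trace norm $o(n)$, obtained by density in $L^1$ together with the uniform bound.

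\textbf{Step 2: from traces to eigenvalue moments.} The hard issue is that $\frac1{nk}\operatorname{tr}\,p(B_n)=\frac1{nk}\sum_j p(\widetilde\lambda_j)$, but the right-hand side involves $\frac1k\sum_j p(\mu_j(\theta))$ only because $\operatorname{tr}\,p(F)=\sum_j p(\mu_j)$ pointwise. So the polynomial (holomorphic) moments already match. Polynomials, however, are not dense in $C(K)$ for a planar compact $K$, since $\bar z$ cannot be approximated.

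\textbf{Step 3: extension to continuous $g$ (main obstacle).} Here the hypothesis that $\sigma(B)$ has empty interior and connected complement enters. Two facts are needed. First, eigenvalues of $B_n$ cluster at $\sigma(B)$ in the sense of Definition~\ref{def:eigenvaluecluster}. I would prove this via Tilli's localization theorem (eigenvalues lie in $\mathcal{E}\R(F)$), refined using log-majorization from Weyl's inequalities combined with the singular-value distribution of $B_n-\lambda I$ for $\lambda\notin\sigma(B)$, which forces few eigenvalues near such $\lambda$. Second, Mergelyan's theorem applies because $\sigma(B)$ is compact with empty interior and connected complement: every $g\in C(\sigma(B))$ is then a uniform limit of polynomials on $\sigma(B)$.

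The conclusion comes from approximating $g$ by a polynomial $p$ within $\varepsilon$ on a neighbourhood of $\sigma(B)$ and splitting the eigenvalues into those in the neighbourhood and the $o(n)$ outliers. The outliers contribute $o(1)$, because both $g$ and $p$ are bounded on the disc of radius $\|F\|_\infty$. Then let $\varepsilon\to0$.

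I expect the main obstacle to be the clustering of the eigenvalues near $\sigma(B)$ in Step 3, since for non-normal $B_n$ the spectrum is unstable. Log-majorization only controls $\sum\log|\widetilde\lambda_j-\lambda|$ from above, so the clustering must be obtained through a potential-theoretic argument, exploiting the fact that $\sigma(B)$ does not disconnect the plane.
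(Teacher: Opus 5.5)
The paper does not prove this statement. It imports it from \cite{donatelli2012canonical} as a black box, so the benchmark is the standard argument behind such results. Your architecture is the right one: holomorphic moments via $\|B_n^r-T_n(F^r)\|_1=o(n)$, weak clustering of the eigenvalues at $\sigma(B)$, then Mergelyan--Lavrentiev on $\sigma(B)$ with an $o(n)$ outlier count. Steps 1--2 and your closing approximation argument are sound. One minor correction: $\|B_n\|\le\|F\|_\infty$ does not follow from the singular-value distribution, which says nothing about the largest singular value. It holds because $B_n$ is a compression of $B$, which is unitarily equivalent to multiplication by $F$ on $L^2([-\pi,\pi];\mathbb{C}^k)$.

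\textbf{The gap: clustering in Step 3.} This is the real content of the theorem, and the mechanism you propose for it fails. The identity $\prod_j|\widetilde\lambda_j-\lambda|=|\det(B_n-\lambda I)|=\prod_j\sigma_j(B_n-\lambda I)$ is exact; what is one-sided is the limit, because $\log$ is unbounded below. Consequently, singular-value information about $B_n-\lambda I$ at a single point $\lambda\notin\sigma(B)$ can never exclude eigenvalues near $\lambda$. Take $k=1$ and $F(\theta)=e^{i\theta}$. Then $B_n$ is the nilpotent shift and $0\notin\sigma(B)$ (the unit circle). $B_n$ has $n-1$ singular values equal to $1$ and one equal to $0$, exactly as the singular-value distribution predicts, yet all $n$ eigenvalues sit at $0$. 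Tilli's localization does not help either: $\mathcal{E}\R(F)$ is convex and usually much larger than $\sigma(B)$. In the paper's own application it is the two-dimensional region $\mathcal{D}$, whereas $\sigma(T)$ is two curves. So the connectedness of $\mathbb{C}\setminus\sigma(B)$ must be used globally; you name this need but do not supply the argument.

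\textbf{Closing it with your own tools.} Apply Weyl's inequalities to $q(B_n)$ for a suitable polynomial $q$, rather than to $B_n-\lambda I$.

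Since $\mathbb{C}\setminus\sigma(B)$ is connected, $\sigma(B)$ is polynomially convex (Runge). So for $\lambda\notin\sigma(B)$ there is a polynomial $p$ with $|p(\lambda)|>\max_{\sigma(B)}|p|$. Now $\sigma(B)$ is the spectrum of $F$ in the Banach algebra $L^\infty([-\pi,\pi];\mathbb{C}^{k\times k})$. Spectral mapping and the spectral radius formula then give $m$ such that $q:=p^m$ satisfies $\alpha:=\|q(F)\|_\infty<\beta<\beta'<|q(\lambda)|$ for some $\beta,\beta'$.

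By Step 1, $q(B_n)=T_n(q(F))+E_n$ with $\|E_n\|_1=o(n)$. Split $E_n$ according to whether its singular values exceed $\beta-\alpha$. This gives $q(B_n)=Z_n+E_n'$ with $\|Z_n\|\le\beta$ and $r_n:=\mathrm{rank}\,E_n'\le\|E_n\|_1/(\beta-\alpha)=o(n)$. Hence $\sigma_j(q(B_n))\le\beta$ for $j>r_n$.

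Let $s_n$ be the number of eigenvalues of $q(B_n)$ of modulus greater than $\beta'$. Weyl's product inequality gives $s_n\log(\beta'/\beta)\le r_n\log(C/\beta)$, where $C\ge\beta$ bounds $\|q(B_n)\|$ uniformly. Hence $s_n=o(n)$: only $o(n)$ eigenvalues of $B_n$ lie in the neighbourhood $\{z:|q(z)|>\beta'\}$ of $\lambda$. For any open $\mathcal{O}\supset\sigma(B)$, a finite cover of the compact set $\{|z|\le\|F\|_\infty\}\setminus\mathcal{O}$ by such neighbourhoods yields the clustering.

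\textbf{The potential-theoretic alternative.} The route you allude to also works. Set $\eta_n:=\frac{1}{nk}\sum_j\delta_{\widetilde\lambda_j}$, let $\omega$ be the measure defined by the right-hand side of the theorem, and let $U_\eta(\lambda):=\int\log|z-\lambda|\,d\eta(z)$.
\begin{itemize}
\item Test the singular-value distribution of $T_n(F-\lambda I)$ against $\log\max(s,\varepsilon)$ and let $\varepsilon\downarrow0$. This gives $\limsup_n U_{\eta_n}(\lambda)\le U_\omega(\lambda)$.
\item The lower envelope theorem transfers this inequality to any weak limit $\eta$ quasi-everywhere, hence everywhere by subharmonicity.
\item The moments give $U_\eta=U_\omega$ for $|\lambda|>\|F\|_\infty$.
\item $U_\omega-U_\eta$ is nonnegative and superharmonic on the connected open set $\mathbb{C}\setminus\sigma(B)$. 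By the minimum principle it vanishes there, so $\eta(\mathbb{C}\setminus\sigma(B))=0$.
\end{itemize}
With either argument inserted, the rest of your proof goes through.
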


As discussed in \cite[Section 2]{donatelli2012canonical}, Theorem \ref{eq:theorem_canonical} implies that $\sigma(B)$ is a cluster for $\{B_n\}_{n\in\mathbb{N}}$ in the sense of Definition~\ref{def:eigenvaluecluster}.
In the next result, we show that, in our setting, $\sigma(T)$ satisfies the assumptions of Theorem \ref{eq:theorem_canonical}, thus concluding that $\sigma(T)$ is a cluster for the sequence $\{T_n\}_{n\in\mathbb{N}}$. We will verify in Section~\ref{sec:spectrum} that the eigenvalues of $T_N^{\text{PS}}$ accumulate on $\sigma(T)$ as $N$ increases.
\begin{corollary}\label{cor:clustering}
$\sigma(T)$ is a cluster for the sequence $\{T_n\}_{n\in\mathbb{N}}$ in the sense of Definition~\ref{def:eigenvaluecluster}.
\end{corollary}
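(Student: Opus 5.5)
The plan is to check the two hypotheses of Theorem~\ref{eq:theorem_canonical} for the symbol $F$ in \eqref{eq:Ftheta}, with $k=2$ and $B_n=T_n$. Once they hold, the Szeg\H{o}-type limit follows. The clustering statement is then obtained by the argument of \cite[Section 2]{donatelli2012canonical}. Take any open set $\mathcal{O}\supset\sigma(T)$. Because $\sigma(T)$ is compact and all eigenvalues of $T_n$ lie in the bounded set $\mathcal{D}$ of the previous theorem, we can choose a continuous $g$ with bounded support, $0\le g\le 1$, $g\equiv 0$ on $\sigma(T)$ and $g\equiv 1$ on $\mathcal{D}\setminus\mathcal{O}$. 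The right-hand side of the limit is then zero, since $\mu_\pm(\theta)\in\sigma(T)$. Hence $\frac{1}{2n}\#\{j:\widetilde\lambda_j\notin\mathcal{O}\}\le \frac{1}{2n}\sum_j g(\widetilde\lambda_j)\to 0$, which is exactly Definition~\ref{def:eigenvaluecluster}. The symbol is trigonometric, so it is continuous and in particular in $L^\infty$.

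\emph{Empty interior.} By \eqref{eq:explicitdescriptionSigmaT}, $\sigma(T)$ is the union of the images of the two curves $\theta\mapsto\mu_\pm(\theta)$. These curves are $C^1$: the quantity under the square root is at least $\nu C_{1,m}(\Delta t)^2>0$ because $\sinh(\sigma_m\Delta t)>0$. The image of a $C^1$ curve from a compact interval into $\mathbb{R}^2$ has Lebesgue measure zero, so the union has empty interior.

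\emph{Does not disconnect the plane.} This is the main step, and I would handle it with explicit geometry. Write $c=C_{2,m}(\Delta t)$ and $\kappa=\sqrt{\nu}|C_{1,m}(\Delta t)|>0$. The real part of $\mu_\pm$ is $c\cos\theta$, and the imaginary part is $\pm\sqrt{c^2\sin^2\theta+\kappa^2}$. The key observation is that both depend on $\theta$ only through $\cos\theta$. Put $x=c\cos\theta\in[-c,c]$. Then $c^2\sin^2\theta=c^2-x^2$, so the imaginary part equals $\pm\sqrt{c^2+\kappa^2-x^2}$. As $\theta$ runs over $[-\pi,\pi]$, the curve $\mu_+$ therefore traces, back and forth, the simple arc
\[\Gamma_+=\Bigl\{x+i\sqrt{c^2+\kappa^2-x^2}:\ x\in[-c,c]\Bigr\},\]
which is a graph over the interval $[-c,c]$ and lies in the half-plane $\Im z\ge\kappa>0$. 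Likewise $\mu_-$ traces its mirror image $\Gamma_-$ in $\Im z\le-\kappa$. So, despite being described as "closed curves", each is a degenerate closed curve enclosing no area. Both are arcs of the circle of radius $\sqrt{c^2+\kappa^2}$, which is consistent with the bound $\widetilde\rho(m)$.

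A set that is the disjoint union of two disjoint compact arcs, each homeomorphic to $[0,1]$, has connected complement. For a single arc this is the standard fact that an arc does not separate the plane (Janiszewski). For the union I would invoke Janiszewski's theorem: two compact sets, neither of which separates a pair of points, and whose intersection is connected (here empty), do not jointly separate those points. Alternatively, the claim can be checked directly. Any point off $\Gamma_+\cup\Gamma_-$ can be joined to $\infty$ by a vertical ray going up or down, or a horizontal ray going left or right, that avoids both arcs. For instance, a point with $|\Re z|>c$ escapes horizontally. A point with $|\Re z|\le c$ lying above $\Gamma_+$ escapes upward, one lying below $\Gamma_-$ escapes downward, and one in the strip between the arcs moves horizontally to $|\Re z|>c$ while staying in $|\Im z|<\kappa$. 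Together these show that $\mathbb{C}\setminus\sigma(T)$ is connected, and the hypotheses of Theorem~\ref{eq:theorem_canonical} are verified.
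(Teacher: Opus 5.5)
Your proof is correct and takes the same route as the paper: you verify that $\sigma(T)$ has empty interior and does not disconnect the plane, then apply Theorem~\ref{eq:theorem_canonical}. Your additions make rigorous what the paper only asserts via ``disconnects iff $C_{1,m}(\Delta t)=0$'': the observation that $\mu_\pm$ trace, back and forth, two disjoint circular arcs lying in $\Im z\ge\kappa$ and $\Im z\le-\kappa$, and the explicit test-function derivation of the cluster property. One small slip in your direct check: a point between the arcs with $|\Im z|\ge\kappa$ cannot move horizontally without hitting $\Gamma_\pm$, so it should first move vertically to the real axis, although your Janiszewski argument already covers this case.
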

\begin{proof}
From \eqref{eq:spectrumT} and \eqref{eq:spetrumFtheta}, it follows that $\sigma(T)$ consists of two, distinct, closed curves in the complex plane defined by the maps $\theta\mapsto \mu_-(\theta)$ and $\theta\mapsto \mu_{+}(\theta)$. Thus, $\sigma(T)$ has empty interior. In addition, $\sigma(T)$ disconnects the complex plane if and only if $C_{1,m}(\Delta t)=0$. This requires that $\sigma_m\Delta t=0$, which is never the case since $\nu$, $\Delta t$, and $\lambda_m$ are all positive.
Hence, we have verified the conditions of Theorem~\ref{eq:theorem_canonical} and the claim follows.
\end{proof}

\section{Numerical experiments}\label{sec:4}

The numerical experiments are divided into three groups. Section \ref{sec:spectrum} verifies numerically the bound of Theorem \ref{thm:matrixanalysis} and the eigenvalue clustering described by Corollary \ref{cor:clustering}. Section \ref{sec:numschwarz} compares the theoretical bounds on the spectral radius of the iteration matrix with the actual asymptotic convergence of the time parallel Schwarz method. Finally, Section \ref{sec:numcooling} presents an application of the numerical framework to the optimal control of a periodic heating-cooling device over a growing time window.

\subsection{Matrix bounds and spectrum clustering}\label{sec:spectrum}

Figure \ref{fig:normbounds} compares the behavior of $\rho(T^{\text{PS}}_{N,m})$, $\|T^{\text{PS}}_{N,m}\|_{\infty}$ and $\widetilde \rho(m)$ for different numbers of time intervals $N$. The left panel refers to $m=1$, that is, the smallest eigenvalue of $A$, while the right panel refers to the largest eigenvalue for $m=M$.
We first observe that the infinity norm can be much larger than $1$, and thus not suitable for our scalability analysis. In contrast, the bound $\widetilde \rho(m)$ derived in Section \ref{sec:matrixnorm} is very sharp, smaller than one, and most importantly independent of $N$. Secondly, we observe the classical smoothing behaviour of a Schwarz method: low-frequency spatial error components, that is, those associated with small eigenvalues, converge much slower than higher frequency components, and therefore determine the overall convergence behaviour of the algorithm.
This is further confirmed by Figure \ref{fig:widetilderho}, which shows the behaviour of $\widetilde \rho(m)$ with respect to $m$ for different values of $\nu$ and $\Delta t$.
In particular, we notice that as $\Delta t\rightarrow 0$ and $\nu\rightarrow 0$, $\widetilde \rho(1)\rightarrow 1$, thus the overall convergence rate of the time PSM is expected to deteriorate.

\begin{figure}
\centering
\includegraphics[width=0.48\textwidth]{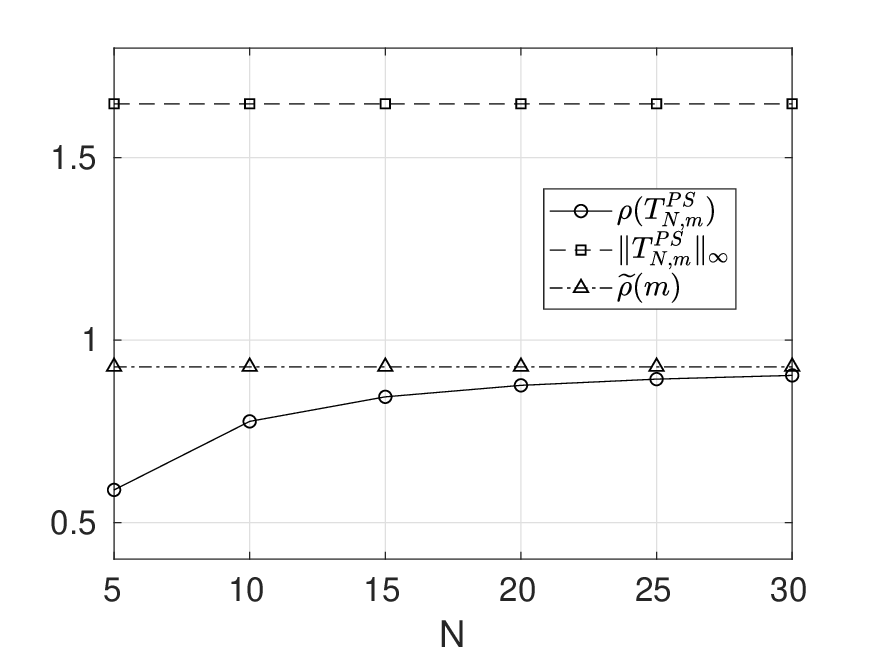}
\includegraphics[width=0.48\textwidth]{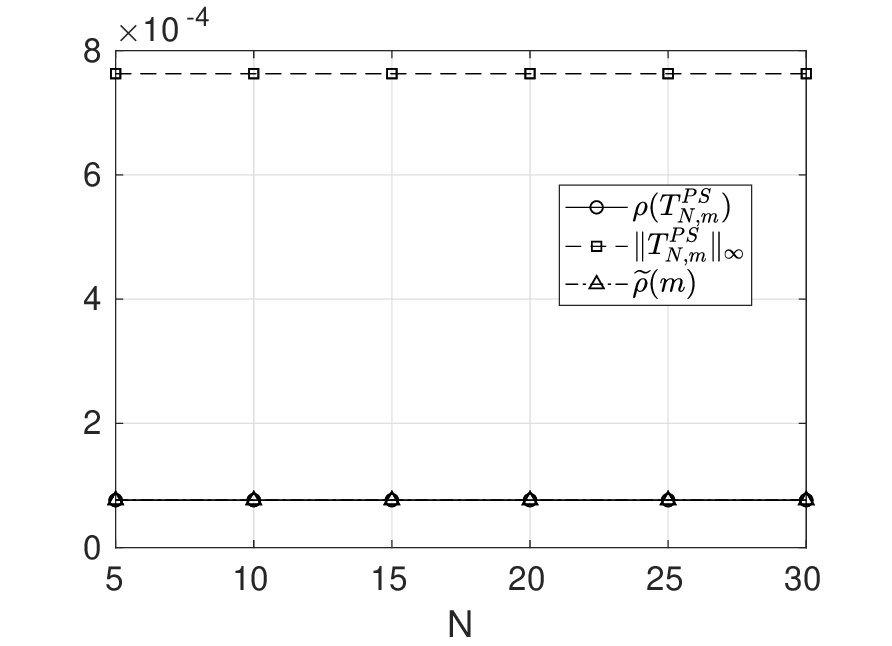}
\caption{Comparison of $\rho(T^{\text{PS}}_{N,m})$, $\|T^{\text{PS}}_{N,m}\|_{\infty}$ and $\widetilde \rho(m)$ for different numbers of time intervals $N$. Left panels refer to $m=1$ while right panels refer to $m=M$. The remaining parameters are: $M=128$, $\nu=10^{-2}$ and $\Delta t=1/M$.
}\label{fig:normbounds}
\end{figure}

\begin{figure}
\centering
\includegraphics[width=0.48\textwidth]{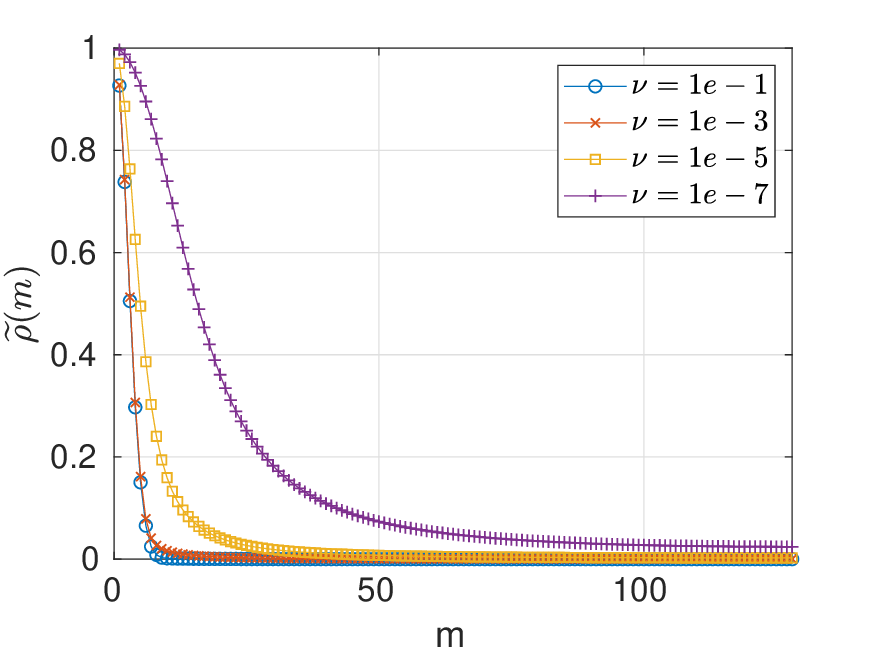}
\includegraphics[width=0.48\textwidth]{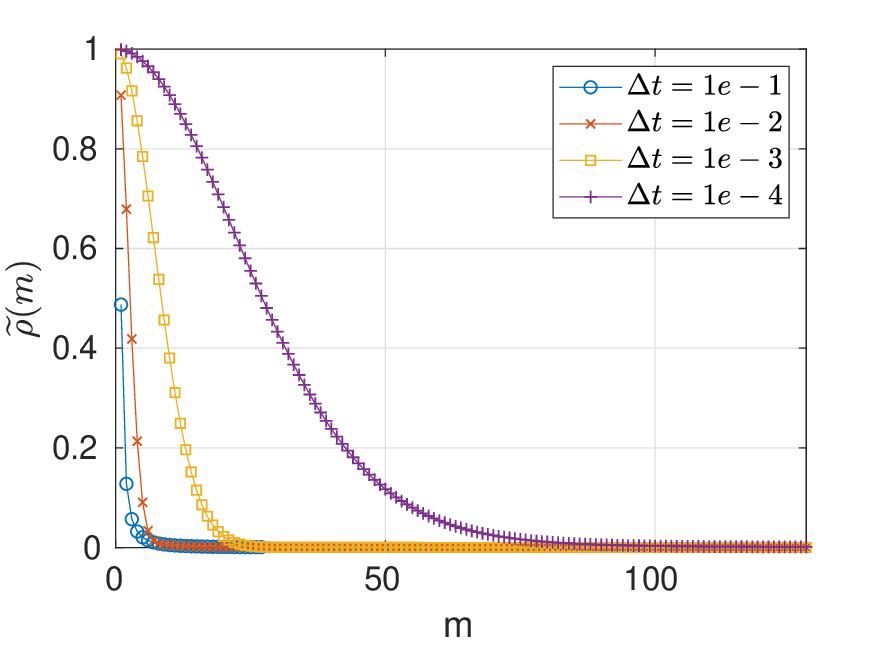}
\caption{Plot of the map $m\rightarrow \widetilde \rho(m;\nu,\Delta t)$ for different values of $\nu$ and $\Delta t$. Parameters: $M=128$, $\Delta t=1/M$ (left panel), $M=128$, $\nu=10^{-2}$ (right panel).}\label{fig:widetilderho}
\end{figure}

Next, we verify numerically the clustering of the eigenvalues of the Toeplitz matrices $\{T_{N,m}^{\text{PS}}\}_{N}$ discussed in Section \ref{sec:Toeplitz}. The first row of Figure \ref{fig:spectra} shows the set $\sigma(T)$ in the complex plane (see \eqref{eq:explicitdescriptionSigmaT}), together with the discrete eigenvalues $\{\widetilde \lambda_j\}_{j=1}^{2(N-1)}$ of $T_{N,1}^{\text{PS}}$ (i.e. $m=1$) for different values of $N$.
The second rows refers instead to $\{T_{N,M}^{\text{PS}}\}_{N}$. Notice that for large $\lambda_M$, $\sigma_M=\sqrt{\lambda_M^2+\nu^{-1}}$ is large, and consequently $C_{2,m}(\Delta t)$ defined in~\eqref{eq:expression_C1C2} tends rapidly to zero, and this explains why $\sigma(T)$ collapes into two points along the imaginary axis.
We observe that the eigenvalues accumulate in the region $\sigma(T)$. Although, Definition \ref{def:eigenvaluecluster} and Theorem \ref{cor:clustering} allow for a moderate number $P(N)$ of eigenvalues that do not cluster on $\sigma(T)$ (with $P(N)/N\rightarrow 0$ as $N\rightarrow \infty$), we have not observed any such outlier in our experiments.

\begin{figure}
\centering
\includegraphics[width=0.48\textwidth]{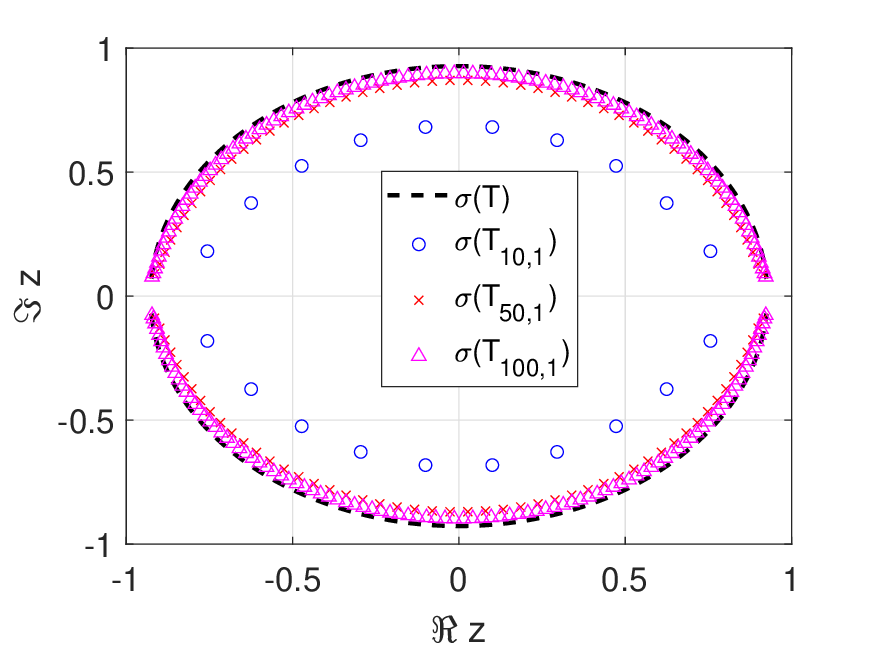}
\includegraphics[width=0.48\textwidth]{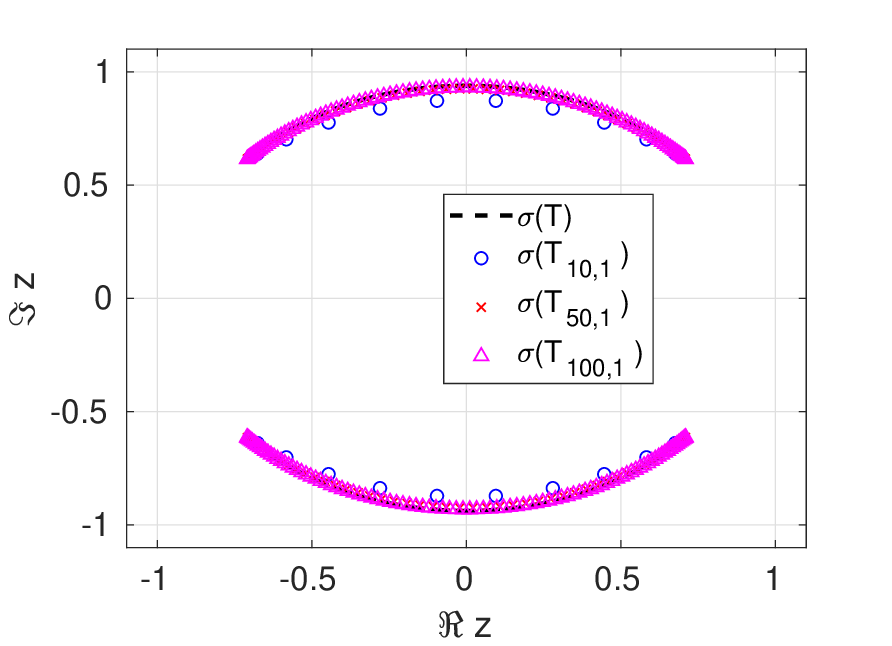}
\includegraphics[width=0.48\textwidth]{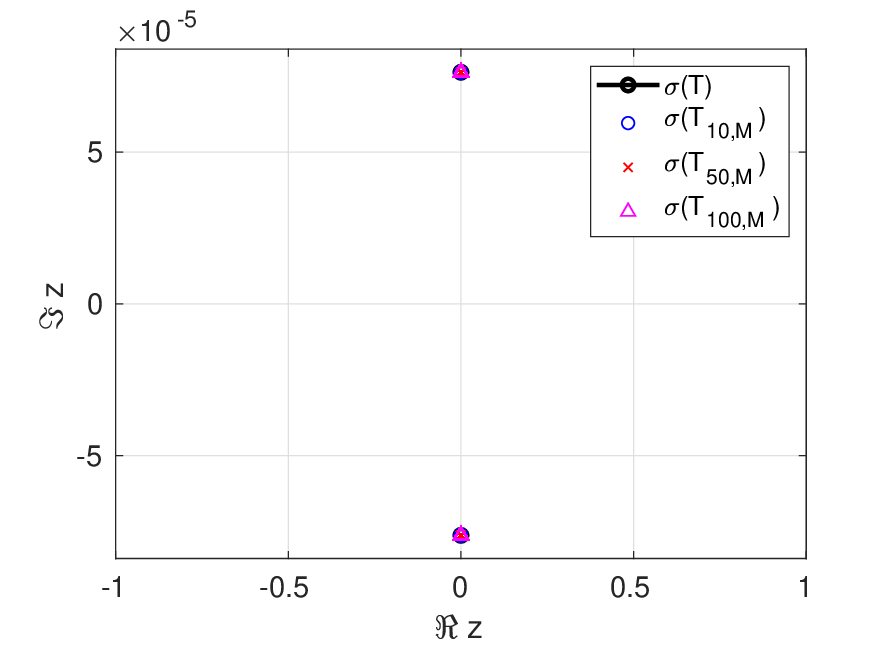}
\includegraphics[width=0.48\textwidth]{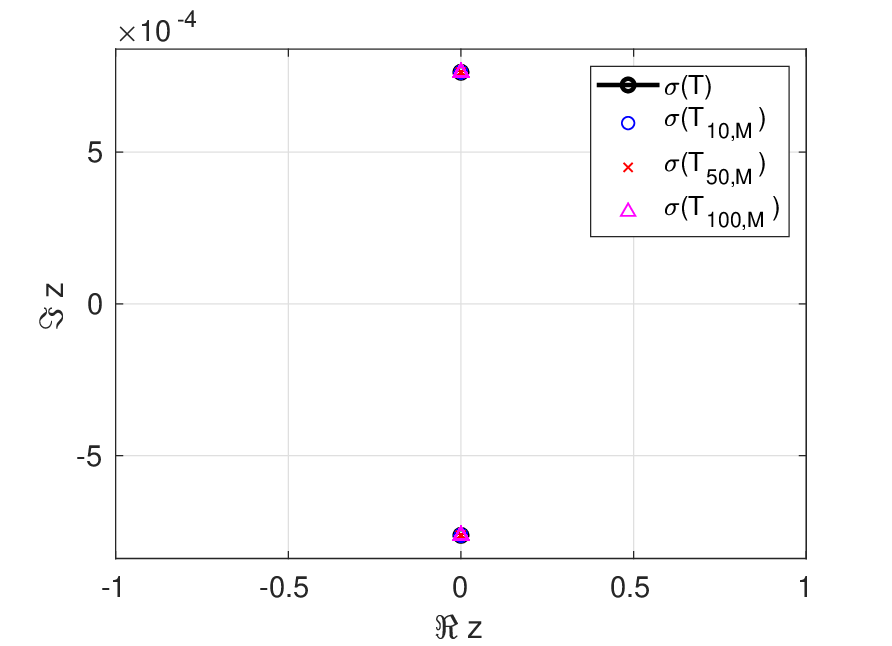}
\caption{Clustering of the eigenvalues of $\{T_{N,m}^{\text{PS}}\}_{N}$ for $m=1$ (top row) and $m=M$ (bottom row). Parameters: $\Delta t=1/M$ and $\nu=10^{-2}$ (left panels) and $\nu=10^{-4}$ (right panels). In the legend, we omit the superscript $\text{PS}$ to improve readability.}\label{fig:spectra}
\end{figure}

\subsection{Weak Scalability of time parallel Schwarz algorithm}\label{sec:numschwarz}

We now test the weak scalability of our time PSM~\eqref{eq:PSQ2-QN-1}-\eqref{eq:PSQN} numerically, and compare the convergence with the theoretical bounds on the spectral radius of the iteration matrix. We consider the manufactured solutions, 
\[\begin{aligned}
y(x, t)&=\sin(\pi x)\left(te^{-\pi^2t} - \frac{e^{-\pi^2T}}{1+\pi^2T}t\right), 
\\
p(x, t)&=\nu\sin(\pi x)\left(e^{-\pi^2t} - e^{-\pi^2T} \frac{1+\pi^2t}{1+\pi^2T}\right),
\end{aligned}\] 
which satisfy the reduced optimality system~\eqref{eq:reduced} together with the target function 
\[\hat y(x, t) = \nu\sin(\pi x)\left(\left(\frac t{\nu} + 2\pi^2\right)e^{-\pi^2t} - e^{-\pi^2T}\frac{\frac t{\nu} + \pi^2t}{1+\pi^2T}\right).\]
We then take this target function, the initial condition $y_0=0$, the space domain $\Omega=(0, 1)$ and the homogeneous Dirichlet boundary condition $g=0$ for our numerical tests. To solve our parabolic optimal control problem~\eqref{eq:J}-\eqref{eq:heat}, we follow the optimize-then-discretize approach that consists of discretizing the reduced optimality system~\eqref{eq:reduced}. We use the Crank-Nicolson method, which gives a second-order approximation of the exact solutions of both state and adjoint variables as shown in Figure~\ref{fig:CNcv}.
\begin{figure}
\centering
\includegraphics[scale = 0.4]{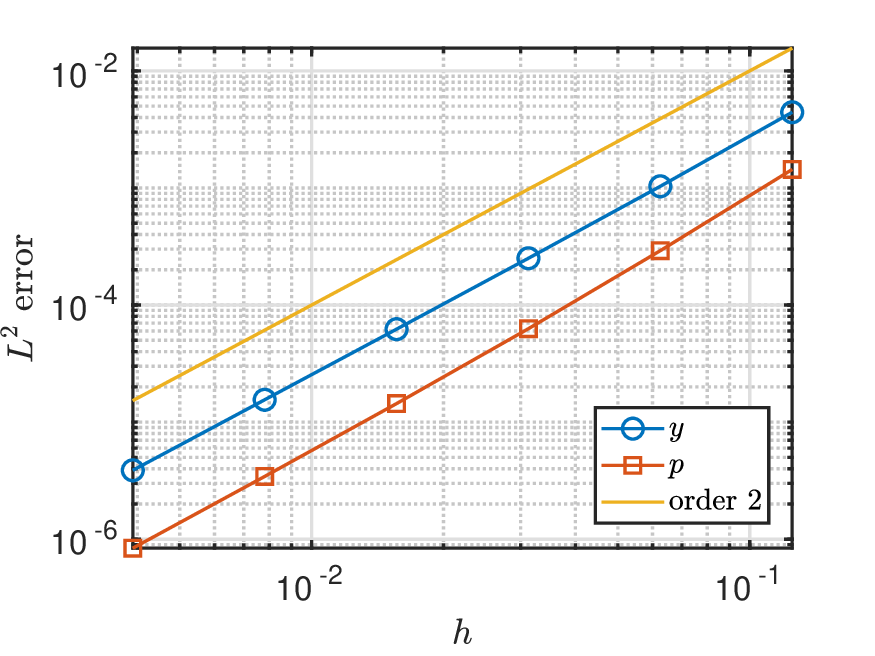}
\caption{Second order convergence of the Crank--Nicolson method applied to solve the reduced optimality system~\eqref{eq:reduced}. The mesh size and time step satisfy $h_x=h_t=h$, and $h$ varies in the set $\{2^{-8}, 2^{-7}, \ldots, 2^{-3}\}$.}
\label{fig:CNcv}
\end{figure}

We now set $\nu = 1/10$, choose the time step and mesh size as $h_t = h_x = 1/32$ and present a weak scalability test using different time interval lengths $\Delta t$. \begin{figure}
\centering
\includegraphics[scale = 0.3]{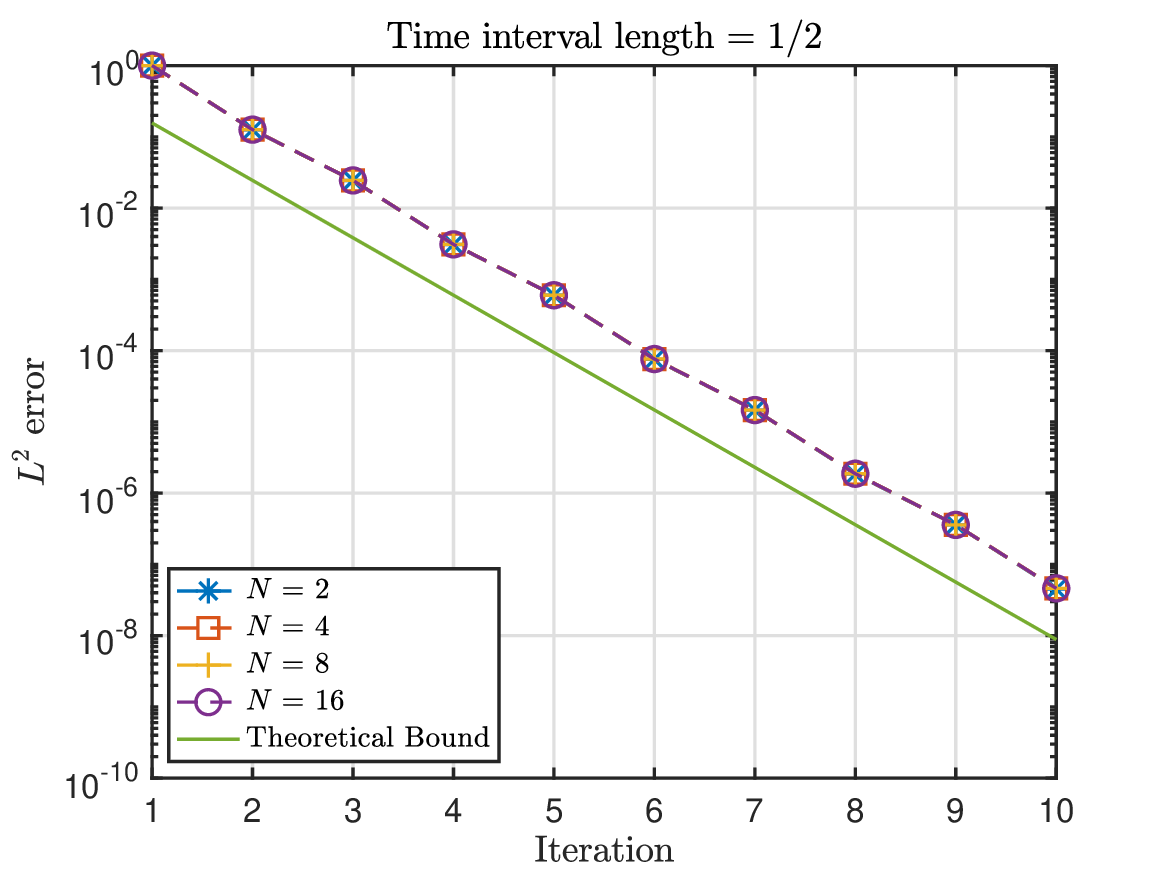}
\includegraphics[scale = 0.3]{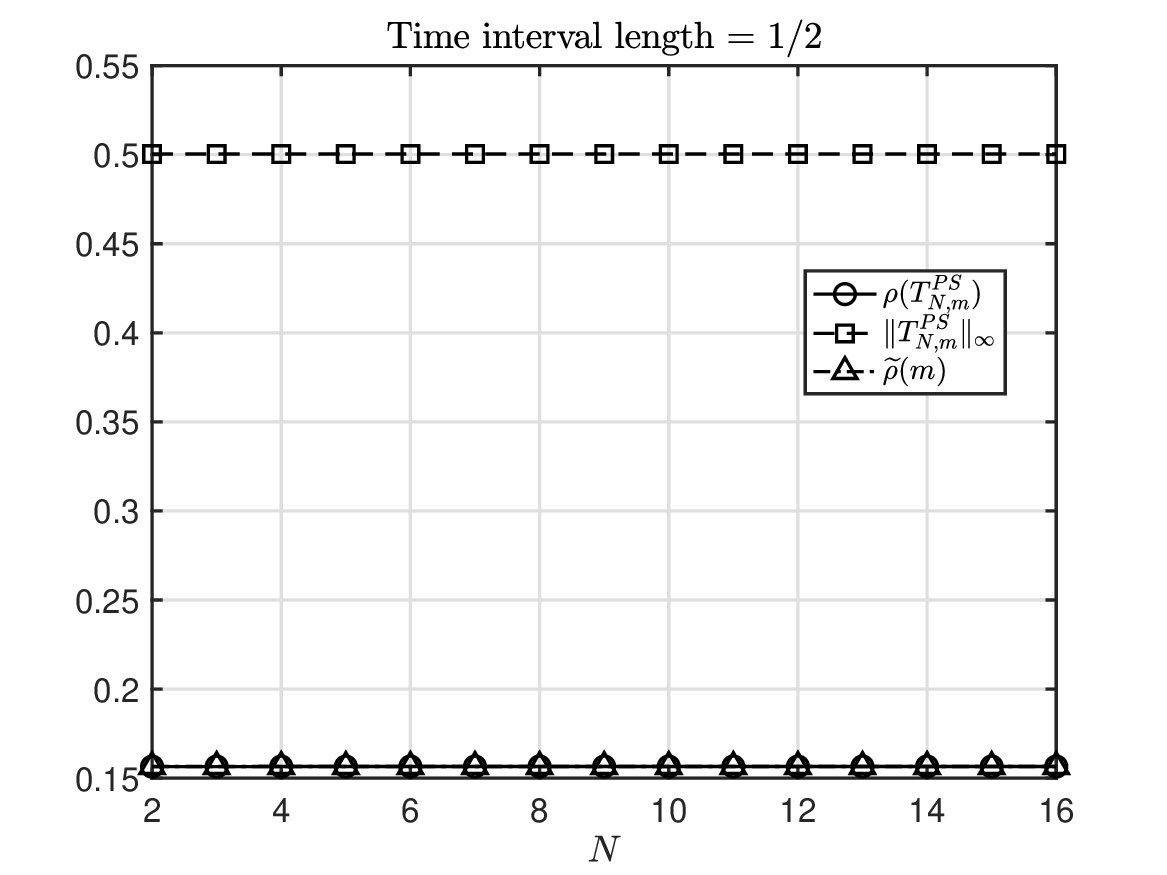}
\includegraphics[scale = 0.3]{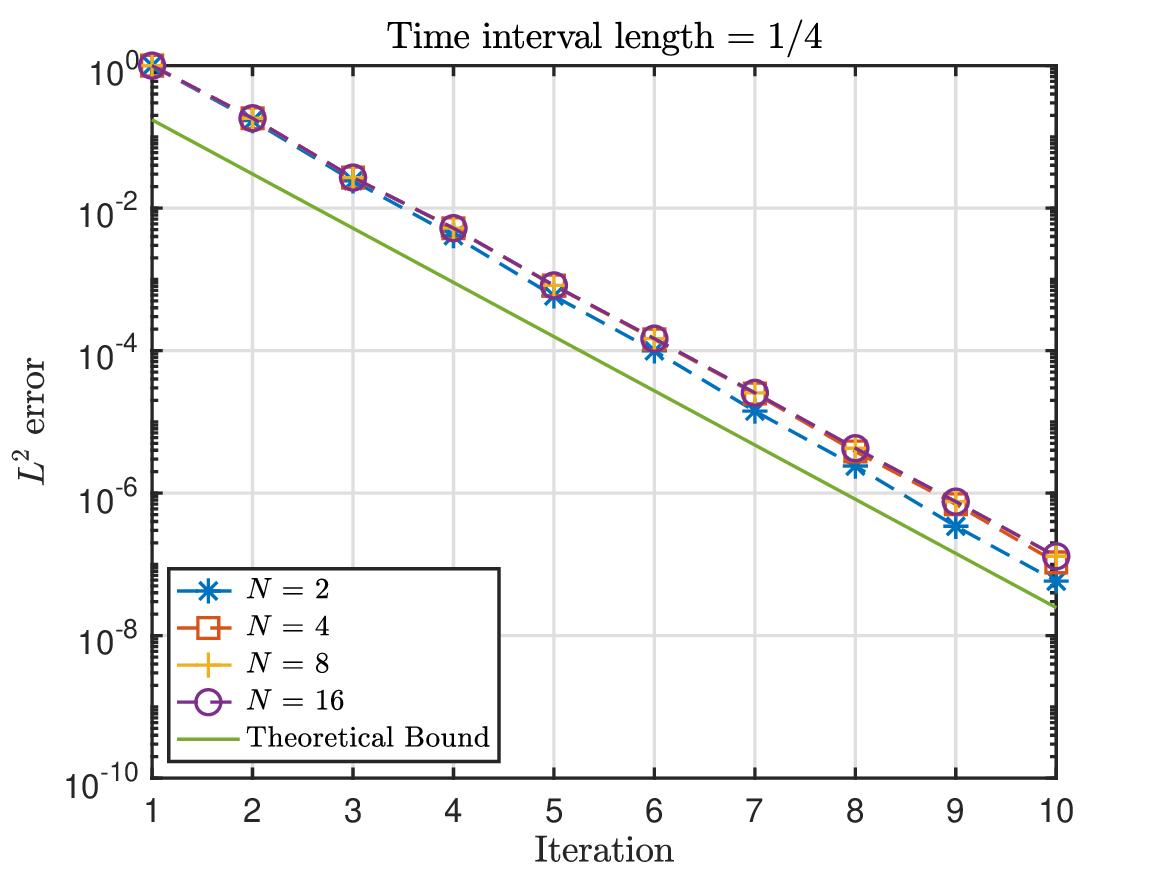}
\includegraphics[scale = 0.3]{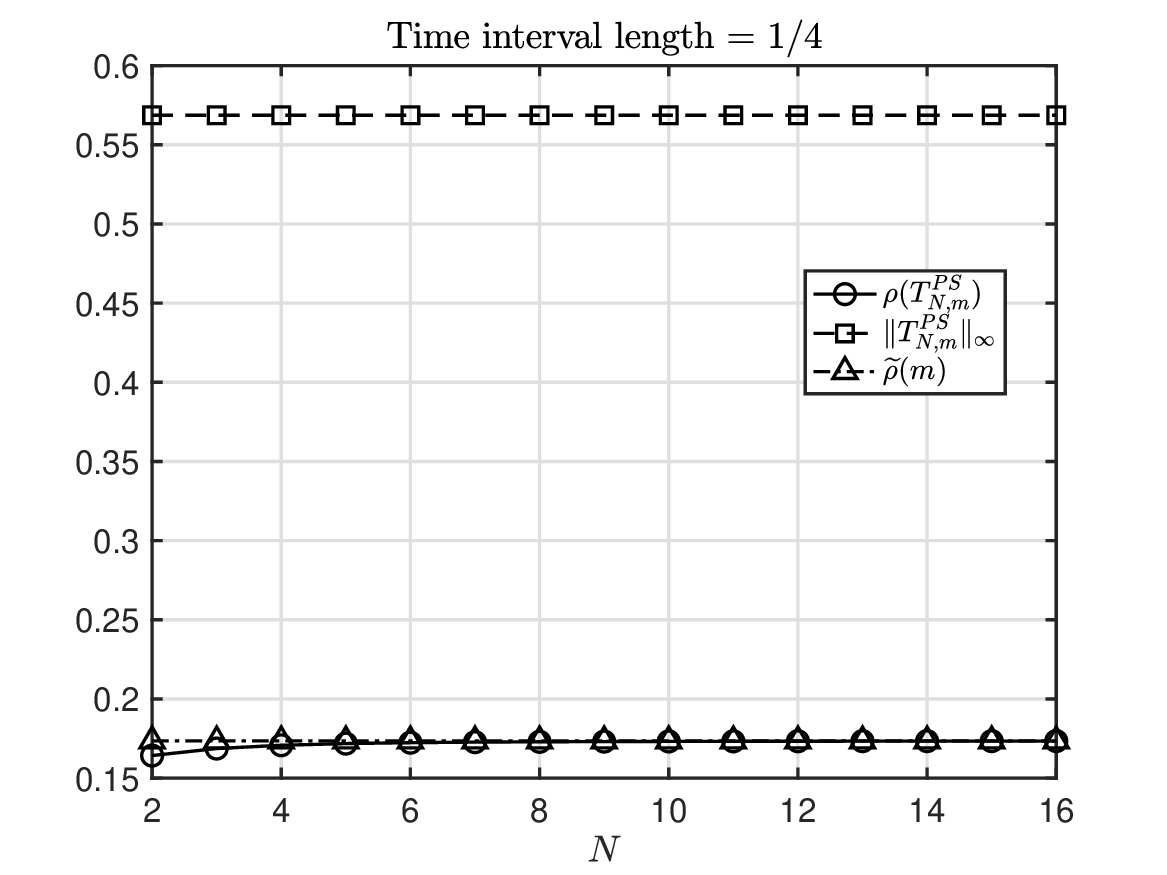}
\includegraphics[scale = 0.3]{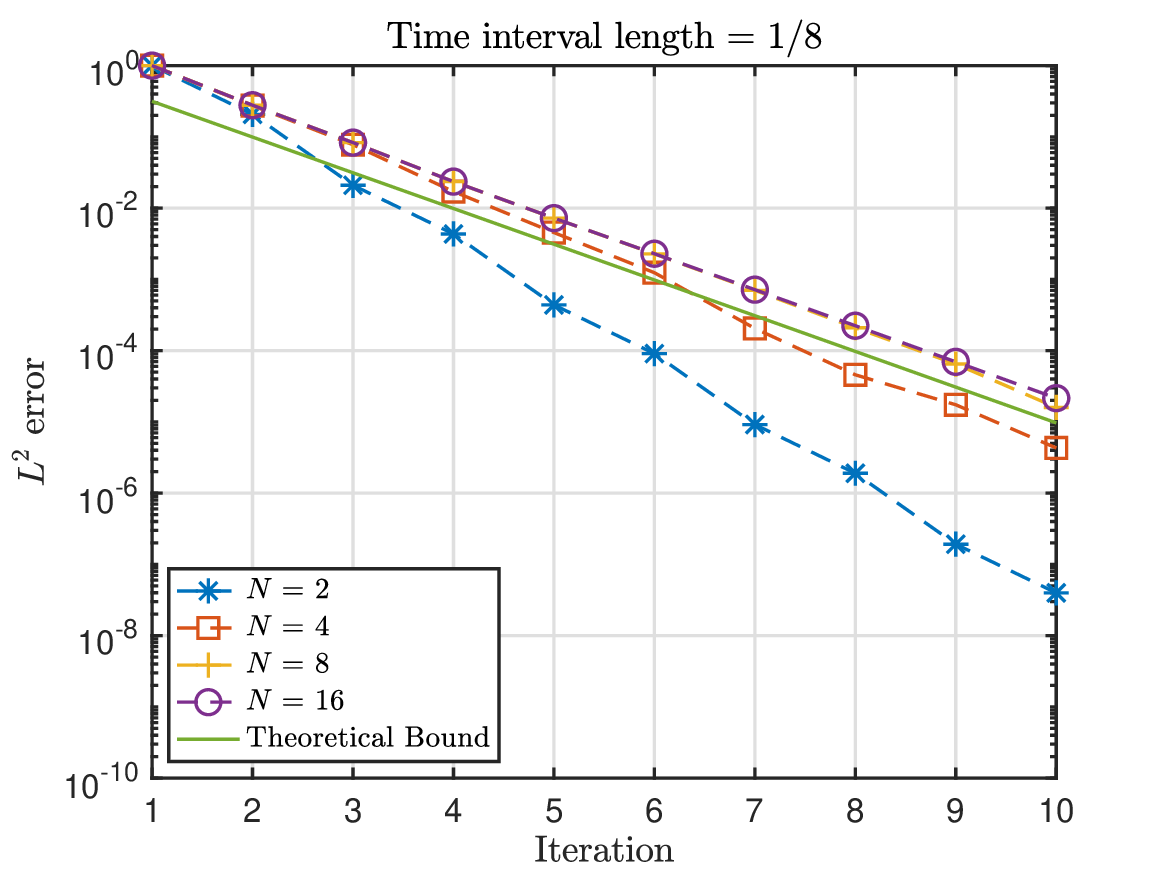}
\includegraphics[scale = 0.3]{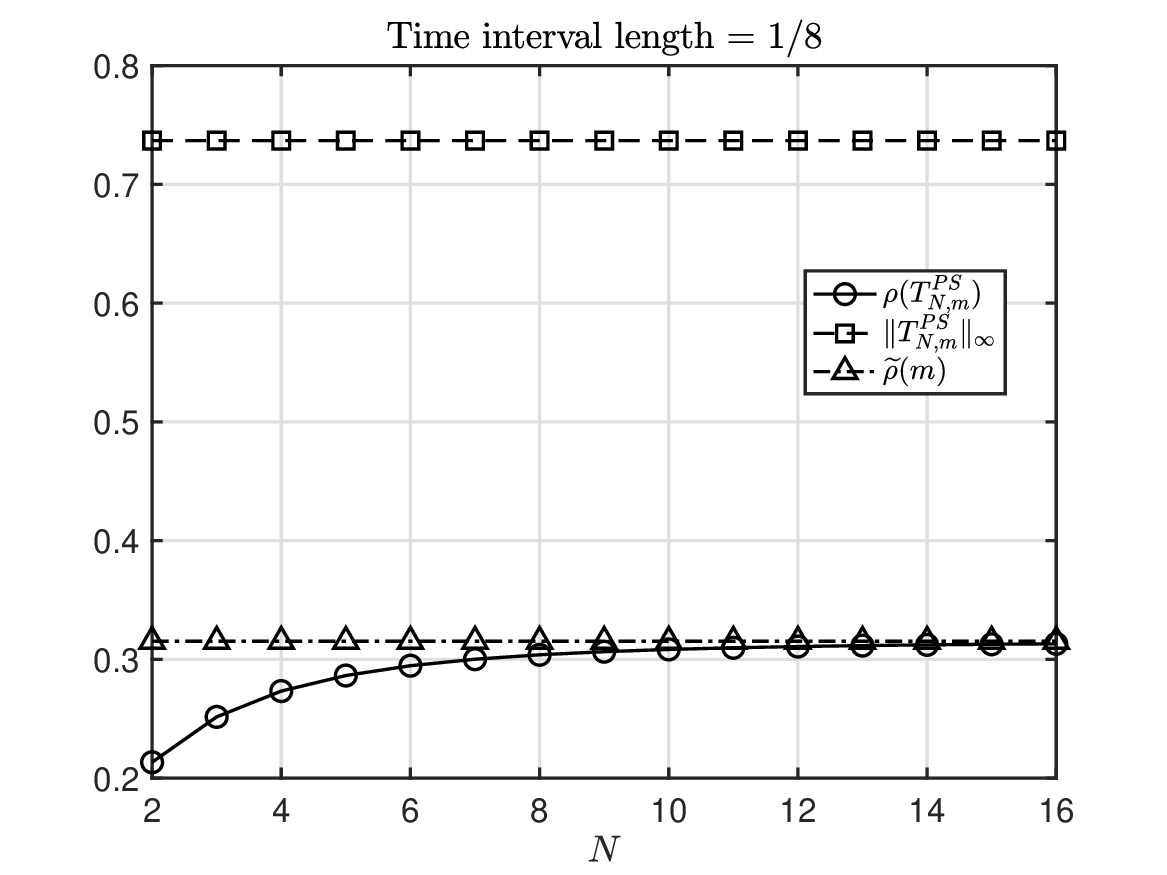}
\includegraphics[scale = 0.3]{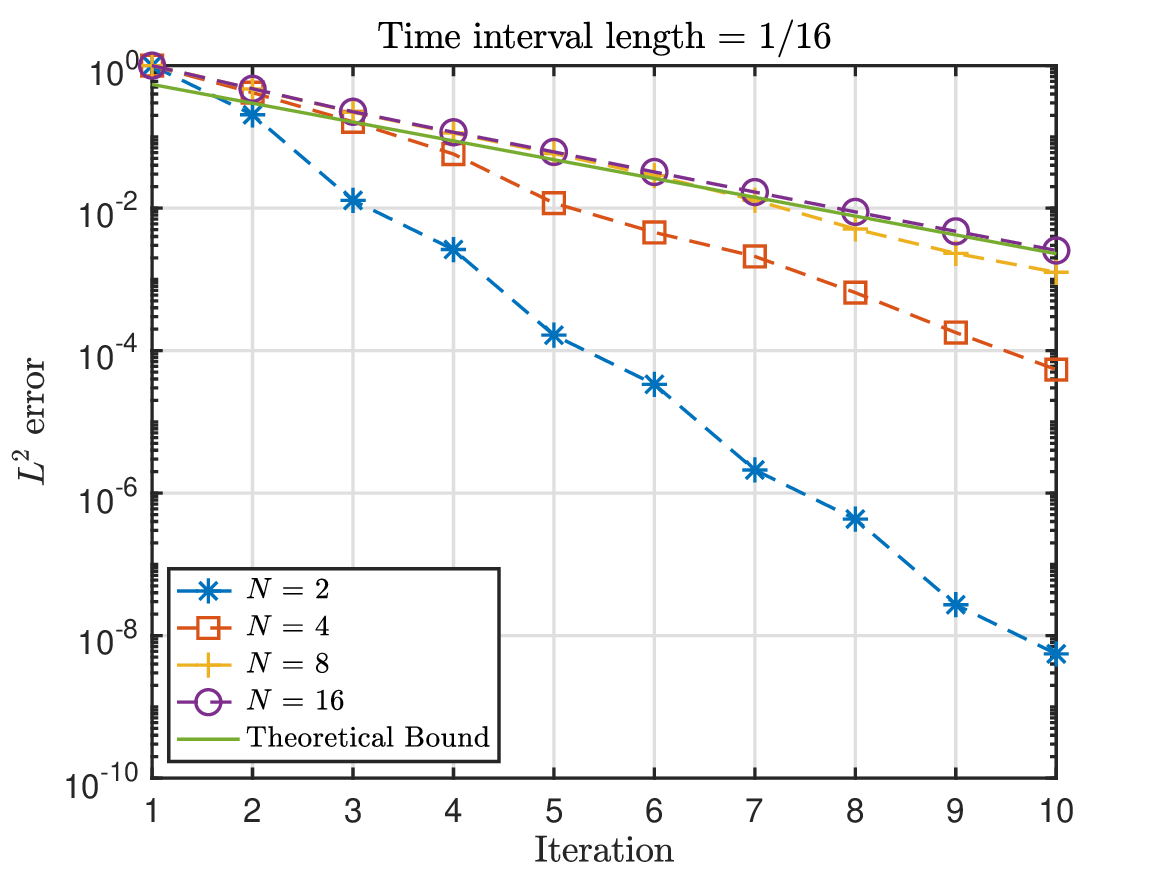}
\includegraphics[scale = 0.3]{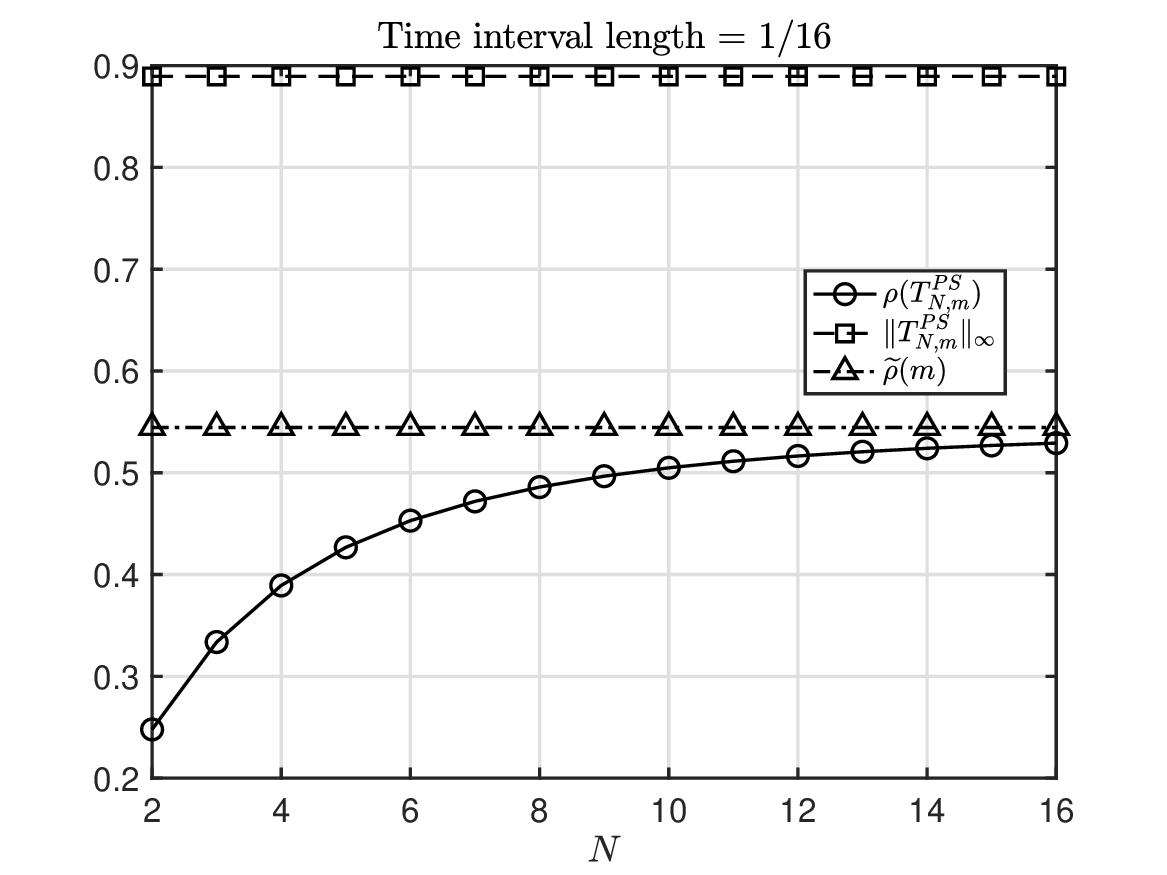}
\caption{Left column: weak scalability test of the time PSM~\eqref{eq:PSQ2-QN-1}-\eqref{eq:PSQN} for $N\in\{2, 4, 8, 16\}$. Right column: $\rho(T^{\text{PS}}_{N,m})$, $\|T^{\text{PS}}_{N,m}\|_{\infty}$ and $\widetilde \rho(m)$ with $m=1$ for different numbers of time intervals $N$. Parameters: $h_t=h_x=1/32$, $\nu=10^{-1}$.}
\label{fig:WSTest}
\end{figure}
Specifically, the left column of Figure~\ref{fig:WSTest} shows the decay of the $L^2$ error on the state and adjoint variables along the iterations for different numbers of time intervals, together with the error decay predicted by our upper bound $\widetilde{\rho}$ derived in Section \ref{sec:3}. We observe that, while the time PSM is indeed weakly scalable, its convergence rate deteriorates as the time interval length decreases. To further investigate this behavior, we show in the right column of Figure~\ref{fig:WSTest} the corresponding spectral radius $\rho(T^{\text{PS}}_N)$ for each time interval length. We remark that for moderate time intervals (e.g., $\Delta t\geq 1/4$), the spectral radius is almost constant as a function of the number of time intervals $N$. However, when $\Delta t$ decreases, the spectral radius starts to vary with respect to $N$, and only for sufficiently large $N$ it reaches the asymptotic plateau. Note that, in addition, for short time intervals, the spectral radius gets larger. In particular for $\Delta t=1/16$, the spectral radius in the case of $N=16$ is more than twice that of $N=2$. This explains the deterioration of the weak scalability for small time interval lengths. Furthermore, note that our bound $\widetilde \rho$ derived in Section~\ref{sec:matrixnorm} provides a very good asymptotic estimate of the spectral radius, and gives a very accurate prediction of the convergence behavior for large $N$.  Although the infinity norm is smaller than one for all test cases, it cannot accurately describe the convergence behavior.

\subsection{Periodic Heating-cooling process}\label{sec:numcooling}
We now use our time PSM~\eqref{eq:PSQ2-QN-1}-\eqref{eq:PSQN} to simulate a real-life application. We consider a rod (or an enclosed room) with a heat source located at its center. The thermal input is time-dependent and periodic. During each time interval $\Delta t$, the heat source power is smoothly increased from zero to its maximum power and then decreased back to zero. This type of periodic heating-cooling process arises in many industrial and engineering applications. For instance, in laser-based material processing and characterization, materials are often subjected to pulsed heating to investigate their thermal properties. Similarly, building climate control systems also operate in cyclic modes and switch heating (or cooling) power on and off to regulate indoor temperature efficiently.

To mimic heating-cooling processes, we consider a target using Gaussian functions
\[\hat y(x, t) = 10\sum_{n=1}^N \exp\left(-50\left(\left(x - \frac L2\right)^2 + \left(t - \frac{(2n-1)\Delta t}2\right)^2\right)\right).\]
Figure~\ref{fig:HeatCoolEx} illustrates on the top panel such a process for four periods ($N=4$) with each period $\Delta t=1/2$ and space length $L=1$.
\begin{figure}
\centering
\includegraphics[scale=0.3]{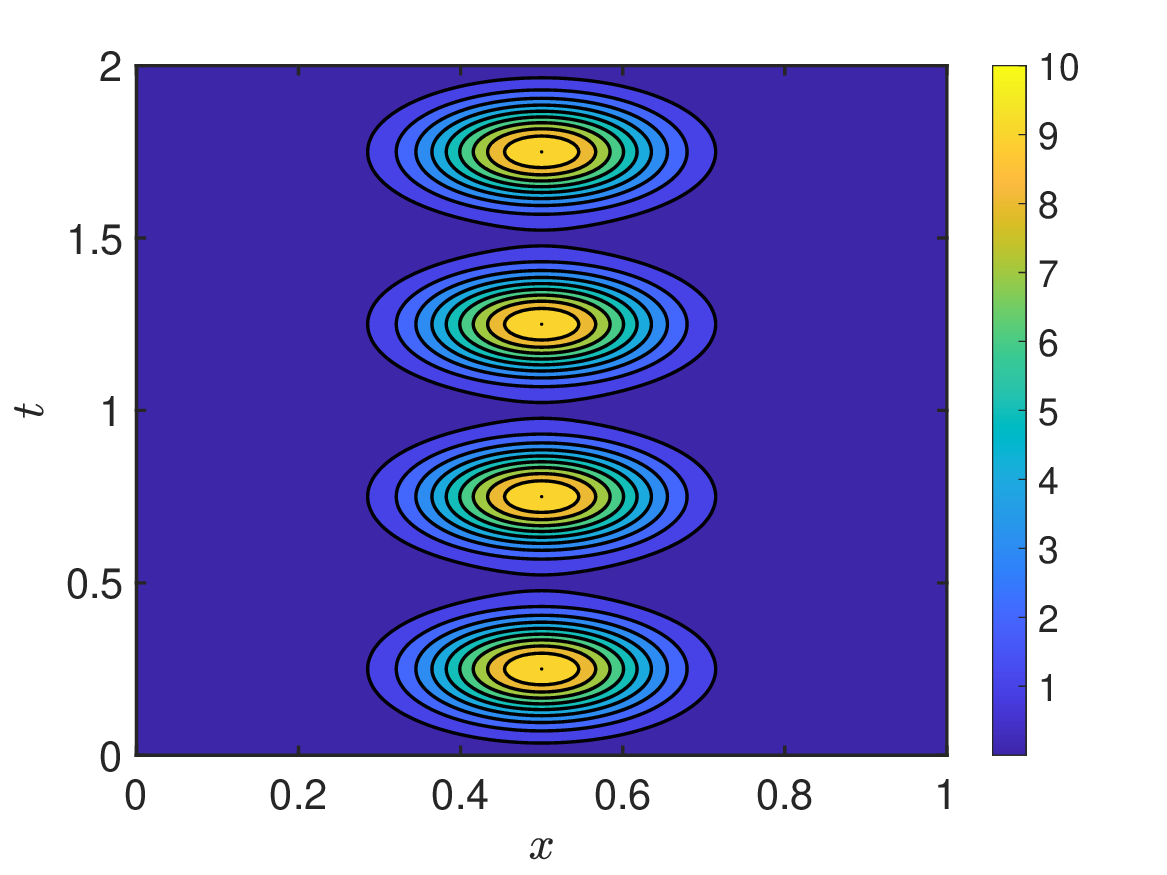}\\
\includegraphics[scale=0.3]{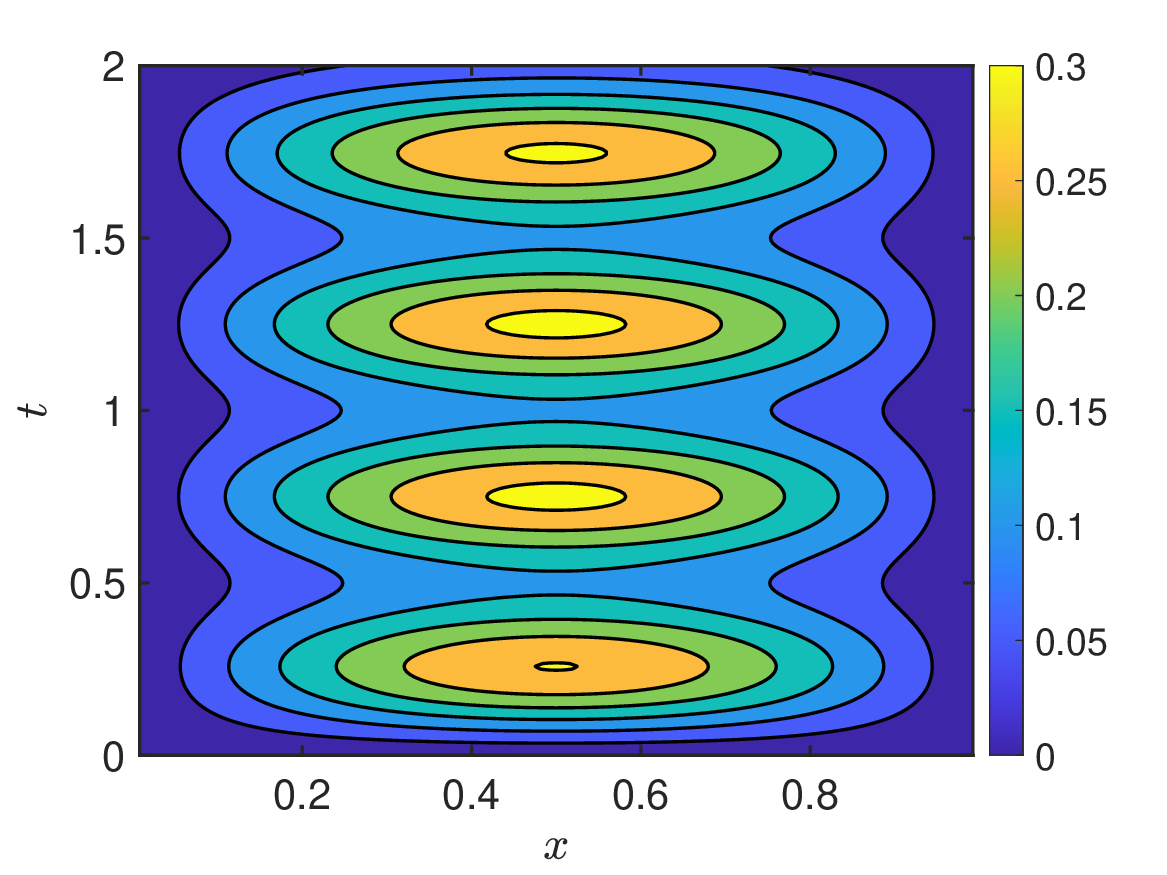}
\includegraphics[scale=0.3]{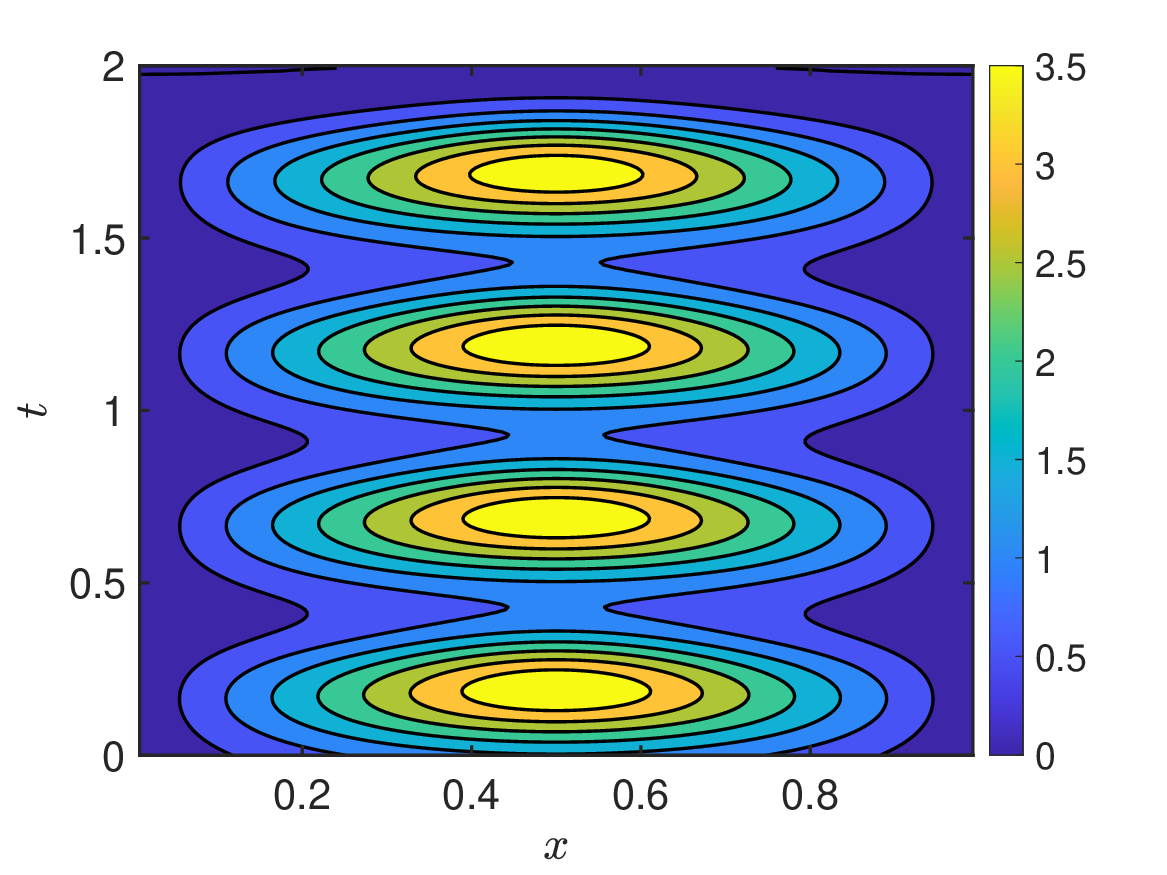}
\caption{Illustration of periodic heating-cooling process for four periods with each period $\Delta t=1/2$ and the space domain $\Omega = (0, 1)$. The top panel shows the target state $\hat{y}$, the left-bottom panel shows the state solution $y$ and the right-bottom panel shows the optimal control $u$.}
\label{fig:HeatCoolEx}
\end{figure}
As the heating-cooling process repeats periodically in time, this is a perfect example to apply our time PSM~\eqref{eq:PSQ2-QN-1}-\eqref{eq:PSQN}. We use once again the Crank--Nicolson method with $h_t = h_x = 1/128$ to discretize the reduced optimality system~\eqref{eq:reduced}. The bottom panels of Figure~\ref{fig:HeatCoolEx} present the solution $y$ and the control $u$ obtained for the four-period example with penalization parameter $\nu = 1/10$.
We observe that the optimal control $u$ (bottom right of Figure~\ref{fig:HeatCoolEx}) is also periodic, whereas the solution $y$ (bottom left of Figure~\ref{fig:HeatCoolEx}) is almost periodic and its magnitude is much smaller than the target $\hat y$ due to the choice of the penalization parameter $\nu$.

To test the scalability of our time PSM~\eqref{eq:PSQ2-QN-1}-\eqref{eq:PSQN}, we keep the same size of period $\Delta t=1/2$ and consider the number of periods varying in a test set: $N\in\{2^1, 2^2, \ldots ,2^9\}$. Figure~\ref{fig:HeatCoolWS} illustrates the $L^2$ error between the solutions of the optimality system over the entire time horizon and the solutions concatenated at each iteration. 
\begin{figure}
\centering
\includegraphics[scale = 0.25]{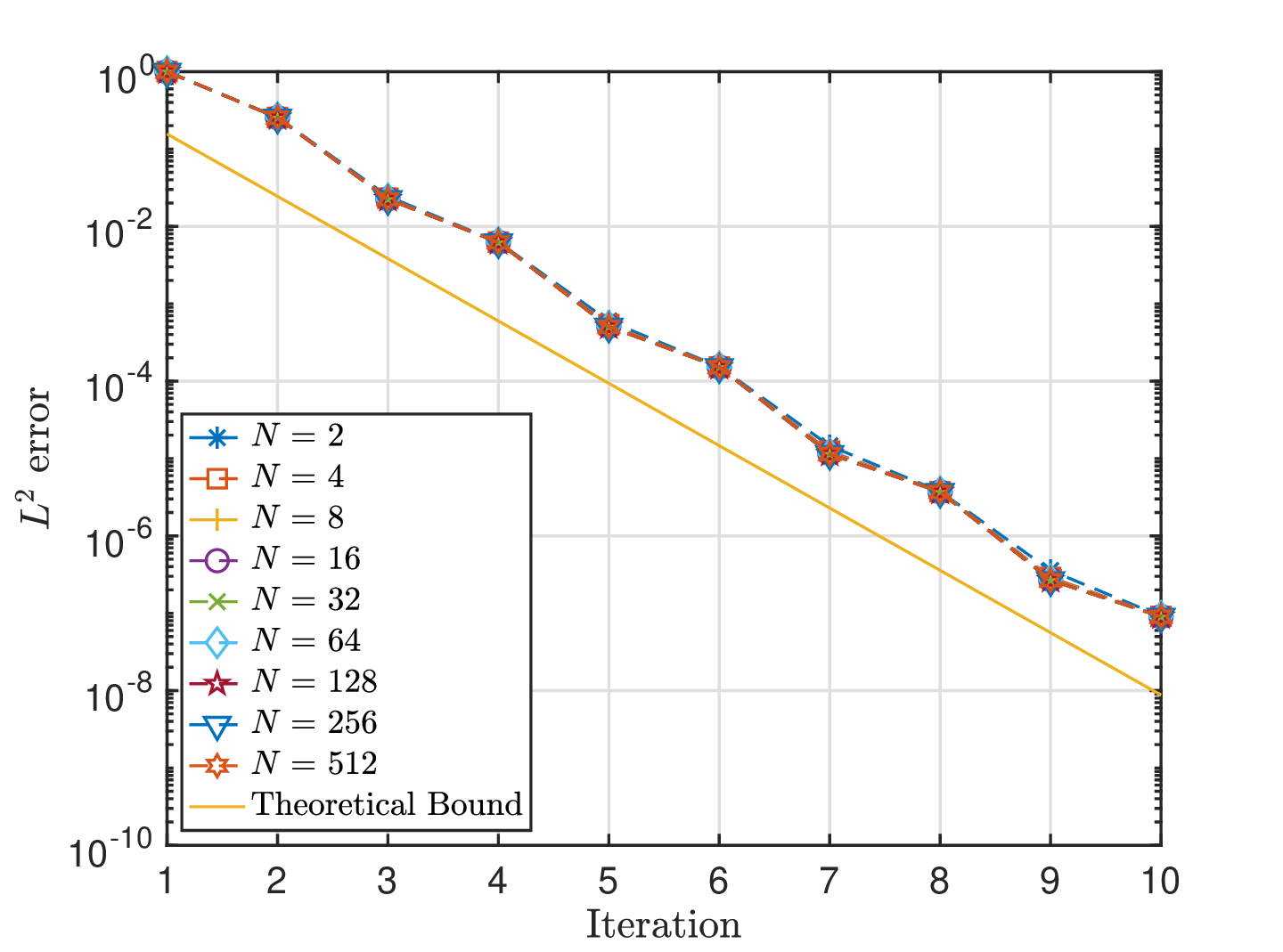}
\caption{Illustration of the $L^2$ error between solutions of the entire fully discrete optimality system and solutions concatenated at each iteration. Each period $\Delta t = 1/2$ and the number of periods $N\in\{2^1, 2^2, \ldots ,2^9\}$.}
\label{fig:HeatCoolWS}
\end{figure}
In particular for $2^9$ periods, we have 8,323,326 unknowns of both state ($y$) and adjoint ($p$) variables for the fully discrete optimality system associated with~\eqref{eq:reduced}, whereas there are 16,510 unknowns of both variables within each time period. We remark that our theoretical bound derived in Section~\ref{sec:matrixnorm} provides a very accurate prediction of the convergence behavior. Finally, we observe once again that the time PSM is weakly scalable, as the number of iterations to reach the desired tolerance is independent of the number of time intervals. Therefore, it is a promising solution strategy for industrial processing requiring parallelization on high-performance computing workstations.

\section{Conclusion}\label{sec:5}

We investigated the convergence behavior of the time PSM applied to the first-order optimality system of parabolic optimal control problems. To demonstrate the weak scalability of this method, we derived convergence estimates for the spectral radius of the iteration matrix using two different approaches. The first approach constructs a special matrix norm, under which the spectral radius of the iteration matrix is bounded by this norm, which is strictly less than one and independent of the number of time intervals. The second approach relies on block Toeplitz matrix theory. We provided a nonasymptotic result that locates all eigenvalues of the iteration matrix in a complex plane, with their moduli bounded by the same special matrix norm identified in the first approach. We also derived an asymptotic result that characterizes the convergence behavior of the time PSM as the number of time intervals tends to infinity. Numerical experiments further showed that our convergence estimates are very sharp and accurately capture the convergence behavior of the time PSM. This work provides the first theoretical framework for analyzing the weak scalability of time domain decomposition methods. Further research directions include generalizing the weak scalability analysis to other time domain decomposition methods, leveraging the analysis presented here to develop multi-level solvers, and implementing the time PSM to solve more complex parabolic optimal control problems on high-performance computing platforms. 

\section{Acknowledgements}
T.V. is a member of the INdAM-GNCS group. 


\end{document}